\theoremstyle{plain}
\newtheorem*{theorem*}{Theorem}
\newtheorem{theorem}{Theorem}
\theoremstyle{definition}
\newtheorem{definition}{Definition}[section]
\theoremstyle{lemma}
\newtheorem{lemma}{Lemma}[section]
\theoremstyle{corollary}
\newtheorem{corollary}{Corollary}[section]
\theoremstyle{proposition}
\newtheorem{proposition}{Proposition}[section]
\theoremstyle{remark}
\title{Outer Billiards with Contraction: Attracting Cantor Sets}
\author{In-Jee Jeong}
\begin{document}

\global\long\def\l{\lambda}
\global\long\def\ep{\epsilon}

\maketitle

\begin{abstract}
	We consider the outer billiards map with contraction outside polygons.
	We construct a 1-parameter family of systems such that each system
	has an open set in which the dynamics is reduced to that of a piecewise
	contraction on the interval. Using the theory of rotation numbers,
	we deduce that every point inside the open set is asymptotic to either
	a single periodic orbit (rational case) or a Cantor set (irrational
	case). In particular, we deduce existence of an attracting Cantor
	set for certain parameter values. Moreover, for a different choice
	of a 1-parameter family, we prove that the system is uniquely ergodic;
	in particular, the entire domain is asymptotic to a single attractor.
\end{abstract}

\section{Introduction}
Let $P$ be a convex polygon in the plane and $0<\l<1$ be a real
number. Given a pair $(P,\l)$, we define the outer billiards with
contraction map $T$ as follows. For a generic point $x\in\mathbb{R}^{2}\backslash P$,
we can find a unique vertex $v$ of $P$ such that $P$ lies on the
left of the ray starting from $x$ and passing through $v$. Then
on this ray, we pick a point $y$ that lies on the other side of $x$
with respect to $v$ and satisfies $|xv|:|vy|=1:\l$. We define $Tx=y$
(Figure \ref{fig:The-outer-billiard}). The map $T$ is well-defined
for all points on $\mathbb{R}^{2}\backslash P$ except for points
on the union of singular rays extending the sides of $P$. 

\begin{figure}[h]
	\centering
	\includegraphics{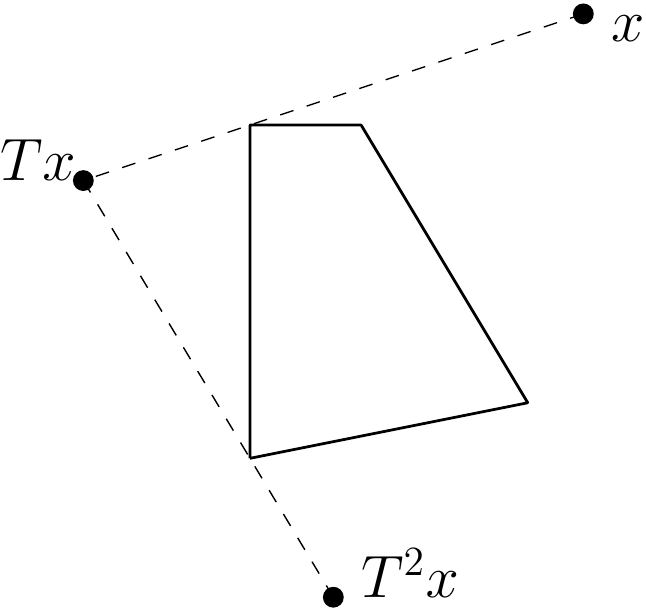}
	\caption{\label{fig:The-outer-billiard}Outer billiards with contraction}
\end{figure}

The case $\l=1$ corresponds to the usual outer billiards map. This
map has been studied by many people \cite{NicolasBedaride2011,MR2854095,MR2562898,MR1354670,MR1145593,1006.2782}; \cite{Tabachnikov1995} is a classical reference. We first point out
that while it is highly nontrivial to show that the outer billiards
map can have unbounded orbits for certain polygons \cite{MR2318496},
it is straightforward to see that with any contraction $0<\l<1$,
every orbit is bounded. 
While the system itself has several interesting behavior, we state a few motivations to study outer billiards with contraction. 
\begin{itemize}
	\item{Electrical engineering: It is known that the outer billiards map is connected to certain systems from electrical engineering \cite{MR1201496,MR2249414,MR944132}. Then, applying contraction may model more realistic systems in which dissipation of energy occurs; see \cite{MR2675099}.}
	\item{Application to the outer billiards map: By considering the limit $\l \rightarrow 1$, one gains some information regarding the structure of the set of periodic points of the outer billiards map. See \cite{Jeong2013} for some illustrations of this phenomenon.}
	\item{An analogy of inner billiards with contraction: Contractive inner billiards have been studied, for example, in  \cite{GianluigiDelMagno2012,GianluigiDelMagno2013a}. Then it is natural to consider the outer billiards counterpart; we have learned this motivation from \cite{GianluigiDelMagno2013}.}
\end{itemize}
Recent papers \cite{MR2465664,Nogueira2012} discuss a few more motivations
to study piecewise contractions in general. 

Experiment suggests that when we pick $P$ and $\l$ at random, (1)
there are only finitely many periodic orbits and (2) all orbits are
asymptotically periodic. Indeed, these two phenomena are expected
from a generic piecewise contracting map; \cite{MR2465664,Nogueira2012,ArnaldoNogueira2013}
prove exactly results of this kind. In \cite{MR2465664}, authors
consider a large class of piecewise affine contractions on the plane
and show that for almost every choice of parameters, any orbit is
attracted to a periodic one. In \cite{Nogueira2012,ArnaldoNogueira2013},
authors consider piecewise contractions on the interval and show that
the number of periodic orbits is always finite, and asymptotic periodicity
for almost every choice of parameters by adding a smoothness assumption
on each piece. We point out special cases where regularity statements
(1) and (2) above can be explicitly proved: When $P$ is either a
triangle, parallelogram, or a regular hexagon, then for each $0<\l<1$,
there exists only finitely many periodic orbits for $T$ and all orbits
are asymptotically periodic. Proofs of this fact are carried out in
\cite{Jeong2013,GianluigiDelMagno2013}, using different methods.

In a sense, our goal will be the opposite to that of aforementioned
papers. The main result of this paper is the following.

\begin{theorem*}
There exists uncountably many pairs $(P,\l)$ for which $T$ has a	Cantor set as an attractor. 
\end{theorem*}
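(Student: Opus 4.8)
The plan is to realize, for a suitable one-parameter family $(P_t,\l)_{t\in J}$, an open invariant region on which the first-return map to a transversal segment is an injective piecewise contraction of an interval (indeed a contracted rotation after an affine identification), and then to feed this into the rotation-number theory for such maps: when the rotation number is rational there is an attracting periodic orbit, and when it is irrational there is a unique minimal set, a Cantor set attracting every orbit. The theorem then follows once one knows that the rotation number, as a function of $t$, takes irrational values for uncountably many $t$.

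First I would set up the reduction. On the region where the selected vertex is $v$, the map $T$ is $x\mapsto(1+\l)v-\l x$, that is, the homothety of ratio $-\l$ centered at $v$; hence any composition $T^{k}$, restricted to a region on which the length-$k$ vertex itinerary is constant, is affine with isotropic linear part $(-\l)^{k}\mathrm{Id}$. The geometry of $P_t$ should be chosen so that there is a bounded open set $U_t$ in the complement of $P_t$, an even return time $k$, and a partition of $U_t$ into finitely many open strips parallel to a fixed direction, on each of which the length-$k$ itinerary is constant and $T^{k}$ maps the strip back inside $U_t$ with linear part $\l^{k}\mathrm{Id}$. The strips then partition a transversal segment $I_t$, and projecting along the strip direction yields a semiconjugacy from $(U_t,T^{k})$ onto $(I_t,f_t)$, where $f_t$ is affine of slope $\l^{k}<1$ on each of finitely many subintervals; injectivity of $T$ makes $f_t$ an injective piecewise contraction with gaps in its image. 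Because the fibre direction is contracted at the same rate $\l^{k}$, the images $T^{nk}(U_t)$ consist of finitely many parallelograms with pairwise disjoint projections and diameters tending to $0$, so the maximal invariant set of $T^{k}$ in $U_t$ is carried homeomorphically onto the maximal invariant set of $f_t$; consequently rationality versus irrationality of $\rho(f_t)$ gives exactly the dichotomy \emph{single attracting periodic orbit} versus \emph{attracting Cantor set} for every point of $U_t$ (and of its orbit under $T$).

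It then remains to vary $t$. One should check that $t\mapsto\rho(f_t)$ is continuous and nonconstant along $J$: continuity is standard for contracted rotations, and nonconstancy would follow by arranging that the additive offset of $f_t$ (the analogue of $\alpha$ in $x\mapsto\l^{k}x+\alpha\bmod 1$) sweeps monotonically over a nondegenerate interval as $t$ ranges over $J$, since the rotation number of a contracted rotation is monotone in that offset and goes from $0$ to $1$. A continuous nonconstant $\rho$ attains every value in a nondegenerate interval, hence irrational values for uncountably many $t$; as distinct parameters give distinct polygons, this produces uncountably many pairs $(P_t,\l)$ with a Cantor attractor. I expect the main obstacle to be the reduction step: the delicate part is to choose the shape of $P_t$ and the region $U_t$ so that the length-$k$ itinerary really is locally constant on $U_t$, the $T^{k}$-images nest strictly inside $U_t$, and the strip structure survives under $T^{k}$, so that the induced transversal map is genuinely a well-defined piecewise contraction of the advertised form; verifying nonconstancy of the rotation number along the family is the other point requiring care.
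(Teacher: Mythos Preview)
Your proposal follows the same route as the paper: build a forward-invariant thin region, reduce the first-return dynamics to an injective piecewise contraction of an interval, invoke rotation-number theory (rational $\Rightarrow$ periodic attractor, irrational $\Rightarrow$ Cantor attractor), and prove continuity plus nonconstancy of the rotation number along a one-parameter family of polygons with fixed $\l$.

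Two places where your sketch diverges from what actually occurs are worth flagging. First, the induced interval map in the paper is \emph{not} a single-slope contracted rotation: the first-return times to the invariant rectangle differ on the two continuity pieces ($4$ versus $6$ iterates of $T$), so the slopes of $g$ are $\l^{4}$ and $\l^{6}$, not a common $\l^{k}$. Your hypothesis of a uniform even return time $k$ with linear part $\l^{k}\mathrm{Id}$ on every strip is therefore too optimistic; the paper instead works with the Rhodes--Thompson rotation theory for arbitrary strictly increasing degree-one circle maps, which handles different slopes without difficulty. Second, because the map is not of the form $x\mapsto\l^{k}x+\alpha(t)\bmod 1$, your nonconstancy argument via monotonicity in a single additive offset does not apply directly---the parameter $a$ moves the discontinuity and both affine branches at once. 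The paper establishes nonconstancy by direct computation at the endpoints: at $a=0.3$ the map $g$ has a fixed point (so $\rho=0$), at $a=0.6$ it has none (so $\rho\neq 0$), and continuity (via Hausdorff convergence of filled graphs) then yields uncountably many irrational values in between. Your instinct that the reduction step is the main obstacle is exactly right: the bulk of the paper's work is the explicit construction of the invariant rectangles $R_{1},R_{2}$ and the verification of the nesting $T^{3}R_{1}\subset R_{2}$, $T^{4}R_{2l}\subset R_{2r}$, $T^{3}R_{2r}\subset R_{1}$ across the whole parameter range.
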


It will turn out that the set of parameter values where we can show
existence of such Cantor sets also have zero measure in the parameter
space. As far as we know, it is the first explicit description of
a 2-dimensional system (that is not a direct product of two 1-dimensional
systems) having an attracting Cantor set. An outline is as follows:
\begin{enumerate}
	\item We find 1-parameter family of pairs $(P,\l)$ which has a thin forward
	invariant rectangle. (Definition \ref{def:inv_rectangle}, Lemma \ref{lem:invariance})
	\item The thin rectangle is divided into two rectangles, and the dynamics
	on each rectangle is reduced to a one-dimensional piecewise affine
	contraction. (Definition \ref{def:1-dimensional_maps}, Lemma \ref{lem:limit_set_reduction})
	\item We show that the rotation numbers of these maps are well-defined and
	vary continuously with the parameter. (Lemma \ref{lem:rotation number}) 
	\item If this rotation number is irrational, there must be an attracting
	Cantor set. (Theorem \ref{thm:two_cases})
	\item Finally, we conclude the proof by show that the rotation number is
	non-constant as a function of the parameter by computing 2 examples.
	(Theorem \ref{thm:cantorset})
\end{enumerate}
A brief theory of rotation numbers for discontinuous circle maps is
given in Section \ref{sec:Rotation-Theory-for}, and the above outline
is carried out in Section \ref{sec:Attracting-Cantor-Sets}. In Section
\ref{sec:Discussions}, we consider more general parameter values
and discuss global behavior of the system. In particular, we
show that there are cases where the attracting Cantor set is the global
attractor.

\section{Rotation Theory for Discontinuous Circle Maps \label{sec:Rotation-Theory-for}}

Recall that the rotation number for a circle homeomorphism $f:S^{1}\rightarrow S^{1}$
is defined by 
\begin{equation}
\rho(F)=\lim_{n\rightarrow\infty}\frac{F^{n}(x)-x}{n}\label{eq:rotation number}
\end{equation}
where $F$ is any lift of $f$ into a homeomorphism of $\mathbb{R}$
and $x$ is an arbitrary real number. Once we fix $F$, this limit
exists and independent on $x$. If we have two lifts $F_{1}$ and
$F_{2}$, then $\rho(F_{1})-\rho(F_{2})$ is an integer so that the
rotation number of $f$, $\rho(f)$ is uniquely determined mod 1.
Then, $\rho(f)$ is rational if and only if $f$ has a periodic point.
On the other hand, when $\rho(f)$ is irrational, the $\omega$-limit
set of a point (which is independent on the point) is either the whole
circle or a Cantor set. 

Rhodes and Thompson, in \cite{MR856518,MR1099095}, develops a theory
of rotation number for a large class of functions $f:S^{1}\rightarrow S^{1}$.
We only state the results that we need. A map (not necessarily continuous)
$f:S^{1}\rightarrow S^{1}$ is in class $\mathcal{S}$ if and only
if it has a lift $F:\mathbb{R}\rightarrow\mathbb{R}$ such that $F$
is strictly increasing and $F(x+1)=F(x)+1$ for all $x\in\mathbb{R}$.
Now given such a lift $F$, since it is strictly increasing, we can
define $F^{-}$ and $F^{+}$, where they are continuous everywhere
from the left and from the right, respectively and coincide with $F$
whenever $F$ is continuous. Then we consider the filled-graph of
$F$ defined by 
\[
\Gamma(F):=\{(x,y)|0\leq x\leq1,\, F^{-}(x)\leq y\leq F^{+}(x)\}
\]
which is simply the graph of $F$ where all the jumps are filled with
vertical line segments. We have restricted the set to the region $0\leq x\leq1$
to make it compact. We will consider the Hausdorff metric on the collection
of $\Gamma(F)$ where $F$ is some lift of $f\in\mathcal{S}$. 

In terms of this theory, it does not matter how maps are defined at
points of discontinuity. Therefore, we follow the convention of \cite{Brette2003};
for maps in $\mathcal{S}$, a periodic point (or periodic orbit) of
$f$ will mean a periodic point (or periodic orbit) of a map which
might differ from $f$ at some points of discontinuity. With this
convention, we have the following theorems. 
\begin{theorem*}
	\cite{MR856518} The rotation number $\rho(f)$ is well-defined for
	$f\in\mathcal{S}$ up to mod 1 by the equation \ref{eq:rotation number}
	where $F$ is any strictly increasing degree 1 lift of $f$. This
	number does not change if we redefine $f$ at its points of discontinuity.
	Moreover, $\rho(f)$ is rational if and only if $f$ has a periodic
	point. 
	\begin{theorem*}
		\cite{MR1099095} Let $F_{\theta}$ be a family of strictly increasing
		degree 1 functions $\mathbb{R}\rightarrow\mathbb{R}$ for $\theta\in[0,1]$.
		If $\Gamma(F_{\theta})\rightarrow\Gamma(F_{0})$ as $\theta\rightarrow0$
		in the Hausdorff topology, then $\rho(F_{\theta})\rightarrow\rho(F_{0})$. 
	\end{theorem*}
\end{theorem*}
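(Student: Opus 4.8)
The plan is to characterise $\rho$ by a family of \emph{open} conditions on the filled graph, each obviously stable under Hausdorff perturbation, and then trap $\rho(F_{\theta})$ between two close rationals once $\theta$ is small. Throughout I fix the given lifts and write $\rho_0=\rho(F_0)$; recall that for a strictly increasing degree-$1$ lift $\Phi$ and integers $q\ge1$, $p$ one has $\rho(\Phi^{q})=q\,\rho(\Phi)$ and $\Gamma(\Phi^{q}-p)=\Gamma(\Phi^{q})-(0,p)$. The elementary fact I would establish first is the half-plane criterion
\[
\rho(\Phi)<\tfrac{p}{q}\iff\Gamma(\Phi^{q})\cap\{(x,y):y\ge x+p\}=\emptyset ,
\]
together with its mirror image $\rho(\Phi)>\tfrac{p}{q}\iff\Gamma(\Phi^{q})\cap\{y\le x+p\}=\emptyset$.

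To prove this it suffices, writing $\Psi=\Phi^{q}-p$ and using $\rho(\Psi)=q\rho(\Phi)-p$, to show $\rho(\Psi)<0\iff\Gamma(\Psi)\cap\{y\ge x\}=\emptyset$ for strictly increasing degree-$1$ $\Psi$. If the intersection is empty then, $\Gamma(\Psi)$ being compact and $\{y\ge x\}$ closed, there is $c>0$ with $\Psi^{+}(x)\le x-c$ for all $x$, hence $\Psi^{n}(x)\le x-nc$ and $\rho(\Psi)\le-c<0$; conversely, a point $(x_{0},y_{0})\in\Gamma(\Psi)$ with $y_{0}\ge x_{0}$ forces $\Psi^{+}(x_{0})\ge x_{0}$, and since $\Psi^{+}$ is increasing and degree $1$ we get $(\Psi^{+})^{n}(x_{0})\ge x_{0}$ for all $n$, so $\rho(\Psi)=\rho(\Psi^{+})\ge0$ (using that $\rho$ is unchanged on passing from $\Psi$ to $\Psi^{+}$, by the cited theorem of \cite{MR856518}). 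This step is routine.

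The real work is to show that for each fixed $q$ the map $\Gamma(\Phi)\mapsto\Gamma(\Phi^{q})$ is continuous for the Hausdorff metric; by iterating it is enough to treat $q=2$. Here I would use that $\Gamma(\Phi^{2})$ is exactly the composition of the relation $\Gamma(\Phi)$ with itself, $\{(x,z):\exists y\text{ with }(x,y)\in\Gamma(\Phi)\text{ and }(y,z)\in\Gamma(\Phi)\}$: since $\Gamma(\Phi)$ is the filled graph of an increasing function, the fibre of this composite over $x$ works out to the interval $[(\Phi^{2})^{-}(x),(\Phi^{2})^{+}(x)]=[\Phi^{-}(\Phi^{-}(x)),\Phi^{+}(\Phi^{+}(x))]$. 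And relation-composition of compact sets that are filled graphs of increasing functions, all lying within bounded distance of the diagonal, is Hausdorff-continuous: given $d_{H}(\Gamma(\Phi),\Gamma(\Phi'))\le d$ and $(x,z)\in\Gamma(\Phi^{2})$ with witness $y$, one picks $(x',y')\in\Gamma(\Phi')$ within $d$ of $(x,y)$ and then uses that $\Gamma(\Phi')$, having every jump filled by a vertical segment, contains a point over $x'$ whose height reaches within $O(d)$ of $z$; the reverse inclusion is symmetric. This is the one genuinely fiddly estimate, and it is exactly where the \emph{filling} of the graphs is indispensable --- a near-jump of $\Phi$ may land on a true jump of the second factor, and only the vertical-segment completion prevents the composite from moving by an $O(1)$ amount.

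Granting these two steps, the proof concludes quickly. Given $\varepsilon>0$, pick rationals $p_{1}/q_{1}<\rho_{0}<p_{2}/q_{2}$ both within $\varepsilon$ of $\rho_{0}$ (possible whether or not $\rho_{0}$ is rational). Since $\rho(F_{0})<p_{2}/q_{2}$, the criterion gives $\Gamma(F_{0}^{q_{2}})\cap\{y\ge x+p_{2}\}=\emptyset$, so these compact and closed sets lie at some positive distance $\delta_{2}$; by the composition step $\Gamma(F_{\theta}^{q_{2}})\to\Gamma(F_{0}^{q_{2}})$, so for $\theta$ small $\Gamma(F_{\theta}^{q_{2}})$ stays within $\delta_{2}/2$ of $\Gamma(F_{0}^{q_{2}})$, hence remains disjoint from $\{y\ge x+p_{2}\}$, whence $\rho(F_{\theta})<p_{2}/q_{2}$; symmetrically $\rho(F_{\theta})>p_{1}/q_{1}$ for $\theta$ small. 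Therefore $\rho_{0}-\varepsilon<p_{1}/q_{1}<\rho(F_{\theta})<p_{2}/q_{2}<\rho_{0}+\varepsilon$ eventually, and $\rho(F_{\theta})\to\rho_{0}$. An alternative route would approximate each $F_{\theta}$ uniformly by circle homeomorphisms and invoke the classical continuity of $\rho$ on homeomorphisms, but controlling that approximation near the discontinuities is essentially the same difficulty as the composition estimate above.
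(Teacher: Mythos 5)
The paper does not actually prove this statement: both parts are quoted from Rhodes--Thompson \cite{MR856518,MR1099095} and used as black boxes, so there is no in-paper argument to compare against. Your architecture --- characterize $\rho(\Phi)<p/q$ by the filled graph of $\Phi^{q}$ avoiding a closed half-plane, then stabilize that open condition under Hausdorff perturbation and trap $\rho(F_{\theta})$ between nearby rationals --- is the standard route to the continuity theorem, and your half-plane criterion, including its reduction to $\Psi=\Phi^{q}-p$ and the appeal to the first cited theorem to replace $\Psi$ by $\Psi^{+}$, is correct. Treating the first theorem (existence of the limit, insensitivity to values at discontinuities, rational iff periodic) as an input rather than reproving it is also reasonable.

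There is, however, a genuine gap at precisely the step you flag as the crux. The map $\Gamma(\Phi)\mapsto\Gamma(\Phi^{2})$ is \emph{not} uniformly Hausdorff-continuous, and the witness-chasing you sketch does not produce a point ``within $O(d)$ of $z$.'' Concretely: from $(x,y),(y,z)\in\Gamma(\Phi)$ you get $(x_{1},y_{1})$ and $(y_{2},z_{2})$ in $\Gamma(\Phi')$ within $d$, but $y_{1}\neq y_{2}$, and if $\Phi'$ has a jump of size $J$ strictly between $y_{1}$ and $y_{2}$ --- which Hausdorff closeness of the filled graphs permits whenever $\Phi$ itself jumps near $y$, since the perturbed jump may sit at $y+d/2$ --- then the fibre of $\Gamma(\Phi'^{2})$ over $x_{1}$ tops out at $\Phi'^{+}(\Phi'^{+}(x_{1}))\le\Phi'^{+}(y_{1})$, which can miss $z$ by roughly $J=O(1)$. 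The two inclusions are also not ``symmetric'' as you assert. The saving grace is that your trapping argument only uses the easy inclusion: you need $\Gamma(F_{\theta}^{q_{2}})$ to lie in the $\delta_{2}/2$-neighbourhood of $\Gamma(F_{0}^{q_{2}})$ (so that it stays out of the half-plane), and this follows softly from your fibre identity $(\Phi^{q})^{\pm}=\Phi^{\pm}\circ\cdots\circ\Phi^{\pm}$: a sequence of points of $\Gamma(F_{\theta}^{q_{2}})$ staying a fixed distance from $\Gamma(F_{0}^{q_{2}})$ would, after extracting convergent subsequences of the (bounded, by periodicity) witness chains, yield a witness chain in the closed set $\Gamma(F_{0})$ and hence a point of $\Gamma(F_{0}^{q_{2}})$, a contradiction. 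The reverse inclusion --- full two-sided convergence of the iterated graphs --- is true at a fixed limit but requires exploiting the strict monotonicity of $F_{0}$ to create slack between nearby fibres, and it admits no uniform modulus. Replace your composition estimate by the one-sided compactness argument and the proof closes correctly.
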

Finally, regarding the $\omega$-limit set we refer to \cite{Brette2003}.
\begin{theorem*}
	\cite{Brette2003} If $f\in\mathcal{S}$ has a rational rotation number
	$p/q$, $\omega(x)$ gives a $q$-periodic orbit of $f$ for all $x\in S^{1}$.
	If $f$ has an irrational rotation number, $\omega(x)=\omega(y)$
	for all $x,y\in S^{1}$ and it is either $S^{1}$ or homeomorphic
	to a Cantor set.
\end{theorem*}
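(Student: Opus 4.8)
The plan is to reduce the statement to the classical Poincar\'e theory of circle maps, using repeatedly the one structural feature of a map $f\in\mathcal S$: it has a strictly increasing, hence injective, degree-one lift $F$, and the only discontinuities of $F$ are jumps, so the discontinuity set is countable.

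\textbf{Rational case.} Suppose $\rho(f)=p/q$ in lowest terms. I would pass to the circle map $g$ with lift $G:=F^{q}-p$, which is again strictly increasing, commutes with the unit translation, and has $\rho(G)=q\rho(f)-p=0$. Since $G$ is increasing, the sequence $\bigl(G^{n}(x)\bigr)_{n\ge0}$ is monotone for every lift $x\in\mathbb R$: from $G(x)\ge x$ one gets $G^{n+1}(x)\ge G^{n}(x)$ by induction, and symmetrically if $G(x)\le x$. The first genuine step is to see this sequence is bounded---otherwise $G^{n}(x)\to+\infty$, and choosing $m$ with $G^{m}(x)\ge x+1$ and iterating gives $G^{km}(x)\ge x+k$ for all $k$, so $\rho(G)\ge 1/m>0$, a contradiction (symmetrically for $-\infty$). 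Hence $G^{n}(x)\to L(x)\in\mathbb R$, and passing to the limit in $G^{n+1}(x)=G(G^{n}(x))$ while using one-sided continuity of the monotone $G$ forces $L(x)$ to be a fixed point of one of the one-sided extensions $G^{\pm}$, i.e.\ a genuine fixed point of a map agreeing with $G$ off a countable set---exactly a periodic point in the convention of the paper. Thus $\omega(x)=\{L(x)\bmod1\}$ for $g$, and applying this along each residue class modulo $q$ shows that $\omega(x)$ for $f$ is a single finite $f$-orbit, whose period is exactly $q$ because $p/q$ is in lowest terms.

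\textbf{Irrational case.} Set $\alpha:=\rho(f)$. I would build Poincar\'e's monotone semiconjugacy: fixing a lift $\tilde x$, the Poincar\'e ordering argument (using only monotonicity of $F$ and irrationality of $\alpha$) shows the countable set $A:=\{F^{n}(\tilde x)+m:\ n\ge0,\ m\in\mathbb Z\}$ is ordered inside $\mathbb R$ exactly as $\{n\alpha+m\}$, so $h_{0}\bigl(F^{n}(\tilde x)+m\bigr):=n\alpha+m$ is a well-defined order-preserving map on $A$ commuting with the unit translation and satisfying $h_{0}\circ F=R_{\alpha}\circ h_{0}$ on $A$, where $R_{\alpha}(t)=t+\alpha$. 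Because $\{n\alpha\bmod1\}$ is dense, $h(x):=\sup\{h_{0}(a):a\in A,\ a\le x\}$ extends $h_{0}$ to a continuous non-decreasing surjection with $h(x+1)=h(x)+1$, and the point is that $h$ collapses every jump-gap of $F$, so $h\circ F$ is continuous and agrees with the continuous $R_{\alpha}\circ h$ on the dense set $A$, hence everywhere; descending to $S^{1}$ gives $h\circ f=R_{\alpha}\circ h$. Now let $K\subset S^{1}$ be the complement of the interiors of the nondegenerate fibers $h^{-1}(\{t\})$. Since $h(f^{n}(x))=h(x)+n\alpha$ and these values are equidistributed, $f^{n}(x)$ always lies in the fiber over $h(x)+n\alpha$; distinct such fibers are disjoint with summable length, so they shrink, which forces every accumulation point of $(f^{n}(x))$ into $K$, i.e.\ $\omega(x)\subseteq K$. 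Conversely, given $z\in K$, choosing $n_{k}$ with $h(x)+n_{k}\alpha\to h(z)$ \emph{from the side on which $z$ sits within its fiber} forces $f^{n_{k}}(x)\to z$ by continuity and monotonicity of $h$, so $K\subseteq\omega(x)$. Hence $\omega(x)=K$ for all $x$. Finally, $K$ has no isolated points---if $z\in K$ were isolated then both one-sided neighborhoods of $z$ would lie in gaps, and continuity of $h$ would place $z$ in the interior of a single fiber, a contradiction---so $K$ is perfect; if $K$ has nonempty interior there are no gaps at all, $h$ is a homeomorphism, and $K=S^{1}$, and otherwise $K$ is compact, perfect and nowhere dense, hence a Cantor set.

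I expect the real work to be the bookkeeping at the discontinuities of $F$. One must keep invoking that $F$ is strictly increasing---so injective, with jump discontinuities only, whose projections form a countable set---to know that the fixed point produced in the rational case is a legitimate periodic point in the adopted convention, that $h$ is well defined, continuous, and genuinely intertwines $f$ with $R_{\alpha}$ (rather than merely off a negligible set), and that the complementary intervals of $K$ are wandering with summable total length, so that an orbit cannot sneak into one of them and avoid accumulating on all of $K$.
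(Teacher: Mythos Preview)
The paper does not supply a proof of this statement; it is quoted from \cite{Brette2003} as background and used as a black box, so there is no in-paper argument to compare your attempt against. Your outline is the classical Poincar\'e route---monotone convergence to a fixed point of $G=F^{q}-p$ in the rational case, and construction of a monotone semiconjugacy to the rigid rotation $R_{\alpha}$ in the irrational case---which is indeed how such results are established, and is presumably close to what appears in Brette's paper.

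The sketch is sound in outline, and you rightly identify the discontinuity bookkeeping as the crux. One specific point deserves more care than you give it: you assert that $h\circ F$ and $R_{\alpha}\circ h$ agree ``on the dense set $A$,'' but $A=\{F^{n}(\tilde x)+m:n\ge0,\ m\in\mathbb Z\}$ is only a one-sided forward orbit together with its integer translates and need not be dense in $\mathbb R$---indeed, precisely when the attractor is a Cantor set it is not. What actually saves the argument is that $h_{0}(A)=\{n\alpha+m\}$ is dense, so the monotone extension $h$ is surjective and hence continuous, and is therefore constant on every complementary interval of $\overline{A}$; one must then verify that $F$ respects this gap structure (and that the jump intervals of $F$ also lie in such gaps) so that the semiconjugacy extends across them. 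The standard way to organise this is either to work with the full two-sided orbit via a set-valued inverse on the jump intervals, or to pass to the filled-in graph as Rhodes--Thompson do. This is exactly the sort of detail you anticipate in your closing paragraph, so the proposal is an honest roadmap rather than a finished proof, but the strategy is correct.
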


\section{Attracting Cantor Sets\label{sec:Attracting-Cantor-Sets}}

We begin with some general discussion of the system. When $P$ is
a convex $n$-gon, with clockwisely oriented vertices $A_{1},...,A_{n}$,
then let $S$ denote the union of $n$ singular rays $\overrightarrow{A_{1}A_{2}},...,\overrightarrow{A_{n-1}A_{n}},\overrightarrow{A_{n}A_{1}}$
where $T$ is not well-defined. Then map $T$ is well-defined indefinitely
on the set $X:=\mathbb{R}^{2}\backslash\big(P\cup(\cup_{i=0}^{\infty}T_{\l}^{-i}S)\big)$,
which has full measure on $\mathbb{R}^{2}\backslash P$. We say that
an ordered set of points $(w_{1},...,w_{n})$ is a \textit{periodic
	orbit} if for each $1\leq i\leq n$, either $w_{i}\in X$ and $Tw_{i}=w_{i+1}$
(where $w_{n+1}:=w_{1})$ or $w_{i}\in S$ and $w_{i+1}$ is one of
(at most) two natural choices for $Tw_{i}$ which would make $T$
continuous from one side of the plane. Note that this generalized
notion of a periodic orbit is consistent with the one described in
the previous section. 

Now we proceed to the description of our 1-parameter family of systems,
with parameter $a$ varying in the closed interval $[0.3,0.6]$. The
polygon $P$ will be the quadrilateral with vertices $A=(0,1)$, $B=(a,1)$,
$C=(1,0.2)$, and $D=(0,0)$. The contraction factor $\l$ will be
equal to 0.8, independent on $a$. For simplicity, we will write $\l$
instead of $0.8$. We also set $\mu=\l^{-1}$ 

Let us describe how the points in Figure \ref{fig:Construction_of_Points}
are constructed. To begin with, the point $F$ (respectively, $G$)
lies on the line $\overleftrightarrow{AD}$ (resp. $\overleftrightarrow{BC}$)
and satisfies $|\overline{AD}|:|\overline{DF}|=\l$ (resp. $|\overline{BC}|:|\overline{CG}|=1:\mu$).
Then we see that the line $\overleftrightarrow{FG}$ is parallel to
the line $\overleftrightarrow{AB}$. Then the point $E$ (resp. $H$)
is the intersection of the line $\overleftrightarrow{AB}$ with $\overleftrightarrow{DG}$
(resp. $\overleftrightarrow{AB}$ with $\overleftrightarrow{FC}$).
Finally, we pick the point $I'$ on the segment $\overline{BH}$ which
satisfies $|\overline{AB}|:|\overline{BI'}|=\l:1$. Then we define
$I$ to be the point on the segment $\overline{EA}$ such that $T^{2}I=I'$.

\begin{figure}[h]
	\centering
	\includegraphics{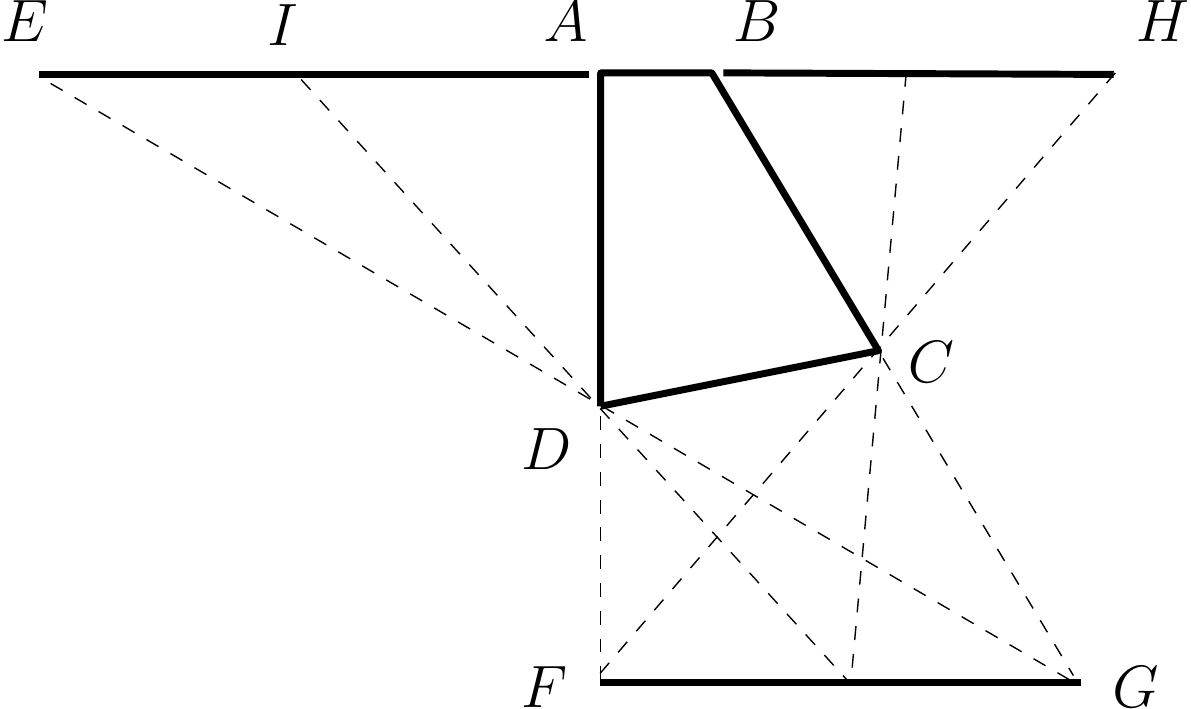}
	\caption{\label{fig:Construction_of_Points}Constructing extra points}
\end{figure}

We say that a region $Y\subset\mathbb{R}^{2}\backslash P$ is \textit{forward invariant }if for each $x\in Y\cap X$, there exists $n>0$ such that
$T^{n}x\in Y$. 
\begin{definition}[The Invariant Rectangles]
	\label{def:inv_rectangle} Let $(-l,1)$ be the coordinates for the
	point $E$. For sufficiently small $\ep>0$, we define $R_{1}$ as
	the closed filled-in rectangular region with vertices $(0,1)$, $(0,1+\ep^{2})$,
	$(-l+\ep,1)$, and $(-l+\ep,1+\ep^{2})$. Then reflect $R_{1}$ over
	$\overline{EA}$ to obtain $R_{2}$. \end{definition}
\begin{lemma}[Invariance]
	\label{lem:invariance} For sufficiently small $\ep>0$, 
	\begin{enumerate}
		\item any point in $R_{1}$ has a forward interate in $R_{2}$ and vice
		versa. In particular, $R_{1}$ and $R_{2}$ are forward invariant
		regions, and
		\item Set $\hat{f}$ and $\hat{g}$ as the first-return maps to regions
		$R_{1}$ and $R_{2}$, respectively Then these maps preserve the vertical
		partition of the rectangles; that is, if two points in $R_{1}$ (or
		in $R_{2}$) have the same $x$-coordinates, there forward iterates
		also have the same $x$-coordinates. Moreover, given a point in $R_{1}$
		(resp. $R_{2}$), the sequence of $y$-coordinates formed by its $\hat{f}$-iterates
		($\hat{g}$-iterates, resp.) converges to $1$, which is the $y$-coordinate
		of $\overline{EA}$.
	\end{enumerate}
\end{lemma}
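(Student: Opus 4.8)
The plan is to compute the action of $T$ on the two thin rectangles $R_1,R_2$ explicitly and show that after a bounded number of steps each returns to the other, with the $x$-coordinate evolving by an explicit interval map and the $y$-coordinate contracting toward the line $y=1$. First I would note that $\overline{EA}$ lies on the line $\overleftrightarrow{AB}$ (which is $y=1$), so $R_1$ and $R_2$ sit just above and below a neighbourhood of the segment $\overline{EA}$, and by the symmetry of the construction (the reflection over $\overline{EA}$) it suffices to analyze one of them, say track a point of $R_1$ until its first return to $R_2$, and then invoke the mirror symmetry for the other half. The key geometric input is that $\overleftrightarrow{FG}\parallel\overleftrightarrow{AB}$ and the defining ratios of $E,F,G,H,I,I'$ were rigged precisely so that the relevant pivoting vertices for points near $\overline{EA}$ are among $A,B,C,D$, and the itinerary (which vertex is used at each step) is constant on $R_1$ provided $\ep$ is small enough; this is where smallness of $\ep$ is used, to keep the whole rectangle on one side of every singular ray encountered along the finite orbit segment.

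Concretely, I would: (1) identify, for a point $x$ in the open rectangle $R_1$, the finite sequence of pivot vertices taking $x$ back into $R_2$ — this is a finite case check using the positions of the singular rays $\overrightarrow{A_iA_{i+1}}$ relative to the strip $\{1\le y\le 1+\ep^2,\ -l+\ep\le x\le 0\}$; (2) since $T$ about a fixed vertex $v$ is the affine map $p\mapsto v+(-\l)(p-v)$ precomposed with nothing else (it is $p \mapsto v - \l(p-v)$ on the correct side), the composition along the itinerary is an affine map of the plane, and I would write it out. Each reflection-with-scaling about a point fixes no direction preferentially, but the composition of an even number of them is an orientation-preserving affinity and of an odd number is orientation-reversing; in either case it is affine, so it maps vertical lines to vertical lines once we check the linear part sends the vertical direction to itself. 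That last fact is exactly statement (2)'s claim that the vertical partition is preserved, and it should follow because the strip is mapped through configurations where the pivots all lie on $y=1$ or the construction forces the linear part to be lower-triangular in the $(x,y)$ basis with the $x$-equation decoupled; I would verify the linear part has the form $(x,y)\mapsto(\alpha x+\beta,\ \gamma y+\delta x+\eta)$ with the first coordinate independent of $y$.

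For the contraction-to-$y=1$ claim: the first-return map $\hat f$ on $R_1$ is a composition of $k$ maps of the form $p\mapsto v-\l(p-v)$; its linear part has determinant $\pm\l^{k}$ with $|\l|<1$, but more precisely I would show the $y$-coordinate of $\hat f^{m}(x)$ satisfies $y_{m}-1 = c\,(\pm\l)^{\,k m}(y_0-1) + (\text{terms driven by the }x\text{-coordinate, which is itself bounded in }|{\cdot}|\le\ep)$, hence $y_m\to 1$; keeping $\ep$ small makes the whole forward orbit segment stay in the strip so the itinerary never changes and the estimate is uniform. The main obstacle I anticipate is step (1): correctly determining the itineraries and proving they are constant on all of $R_1$ (resp. $R_2$) for small $\ep$ — i.e. checking that no singular ray cuts through the finitely many parallelogram images of the strip. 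This is a finite but delicate computation with the specific coordinates $A=(0,1)$, $B=(a,1)$, $C=(1,0.2)$, $D=(0,0)$, $\l=0.8$, and it is the place where the particular choice of the family (and the role of the auxiliary points $E,F,G,H,I,I'$, which presumably are the corners of the region whose first-return dynamics is the interval map of Definition \ref{def:1-dimensional_maps}) really enters; once the itineraries are pinned down, (2) and the convergence of $y$-coordinates are routine linear algebra.
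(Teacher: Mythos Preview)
Your proposal has two connected gaps that would make the argument fail as written.

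First, the reflection over $\overline{EA}$ (the line $y=1$) is \emph{not} a symmetry of the system: the quadrilateral $P$ has $C=(1,0.2)$ and $D=(0,0)$ both strictly below $y=1$, so reflecting $P$ gives a different polygon and the outer-billiards dynamics above and below $y=1$ are genuinely different. You therefore cannot deduce the behaviour on $R_{2}$ from that on $R_{1}$ by symmetry.

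Second, and more seriously, the itinerary is \emph{not} constant on $R_{2}$, no matter how small $\ep$ is. The perpendicular to $\overline{EA}$ through the auxiliary point $I$ cuts $R_{2}$ into two pieces $R_{2l}$ and $R_{2r}$ with different combinatorics: one has $T^{4}R_{2l}\subset R_{2r}$, the other $T^{3}R_{2r}\subset R_{1}$. (This is exactly why the reduced interval map $g$ in the next definition has two branches, and why the point $I$ was introduced.) Your anticipated ``main obstacle'' is thus not a computation to be carried out but an assumption that is false; the correct structure is three containments $T^{3}R_{1}\subset R_{2}$, $T^{4}R_{2l}\subset R_{2r}$, $T^{3}R_{2r}\subset R_{1}$, and the only place smallness of $\ep$ enters is to guarantee that $T(R_{1}\cup R_{2})$ lies below $\overleftrightarrow{BC}$ so the second pivot is $C$ rather than $B$.

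One simplification you are missing: the linear part of $p\mapsto v-\l(p-v)$ is the scalar $-\l\cdot\mathrm{Id}$, so \emph{every} finite composition has scalar linear part $(-\l)^{k}\cdot\mathrm{Id}$. Preservation of the vertical foliation is therefore automatic on each continuity piece; there is no lower-triangular computation to do, and the $y$-equation is $y\mapsto(-\l)^{k}y+c_{y}$ with no $x$-coupling on a fixed piece. The $y$-contraction toward $1$ then follows because every return takes at least three $T$-steps and each step contracts the signed distance to $y=1$ (or to the parallel line $\overleftrightarrow{FG}$) by $\l$.
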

The dynamics is not well-defined at certain points on $R_{1}\cup R_{2}$.
For example, the segment obtained as the intersection of those two
rectangles is singular. We always follow the convention that the dynamics
at such points is defined to be the one that makes $T$ continuous
in the region we are looking at. That is, the segment $\overline{BH}$,
viewed as a subset of $T^{2}(R_{1})$, will reflect on the vertex
$A$, but viewed as a subset of $T^{2}(R_{2})$, it will reflect on
the vertex $B$.
\begin{proof}
	The line passing through the point $I$ and perpendicular to $\overline{EA}$
	divides $R_{2}$ into two closed rectangles $R_{2l}$ (on the left)
	and $R_{2r}$ (on the right). The first claim is proved with the following
	three crucial containments: (see Figures \ref{fig:Iterates} and \ref{fig:Iterates-1})
	\begin{itemize}
		\item $T^{3}R_{1}\subset R_{2}$
		\item $T^{4}R_{2l}\subset R_{2r}$
		\item $T^{3}R_{2r}\subset R_{1}$ (which together with $T^{3}R_{1}\subset R_{2}$
		implies $T^{6}R_{2r}\subset R_{2}$)
	\end{itemize}

Let us prove above containments. First we note that for $\ep>0$ small,
every points on the image $T(R_{1}\cup R_{2})$ lies below the line
$\overleftrightarrow{BC}$ so that entire rectangle $T(R_{1}\cup R_{2})$
reflects on the vertex $C$ and not $B$ (this is the only place we
need $\ep$ to be small). Hence $T^{2}(R_{1}\cup R_{2})$ is another
rectangle intersecting $\overline{BH}$. Now, to prove $T^{3}R_{1}\subset R_{2}$,
it is enough to check that the $x$-coordinate of the point $H$,
when reflected and contracted by $\l$ at the vertex $A$, is contained
in the segment $R_{1}\cap R_{2}$. This holds for all $0.3\leq a\leq0.6$.
Next, it is obvious from $T^{3}R_{1}\subset R_{2}$ that $T^{3}R_{2r}\subset R_{1}$.
Lastly, it is another calculation to show that $T^{4}R_{2l}\subset R_{2r}$
for all $0.3\leq a\leq0.6$.

Next, since each iterate of $T$ contracts the distance between the
point and either the line $\overleftrightarrow{AB}$ or $\overleftrightarrow{FG}$
by $\l$ and it takes for points in $R_{1}\cup R_{2}$ at least $T^{3}$
to return to $R_{1}\cup R_{2}$, the convergence towards the segment
$\overline{EA}$ is exponential.
\end{proof}

\begin{figure}[h]
	\centering
	\includegraphics[scale=0.3]{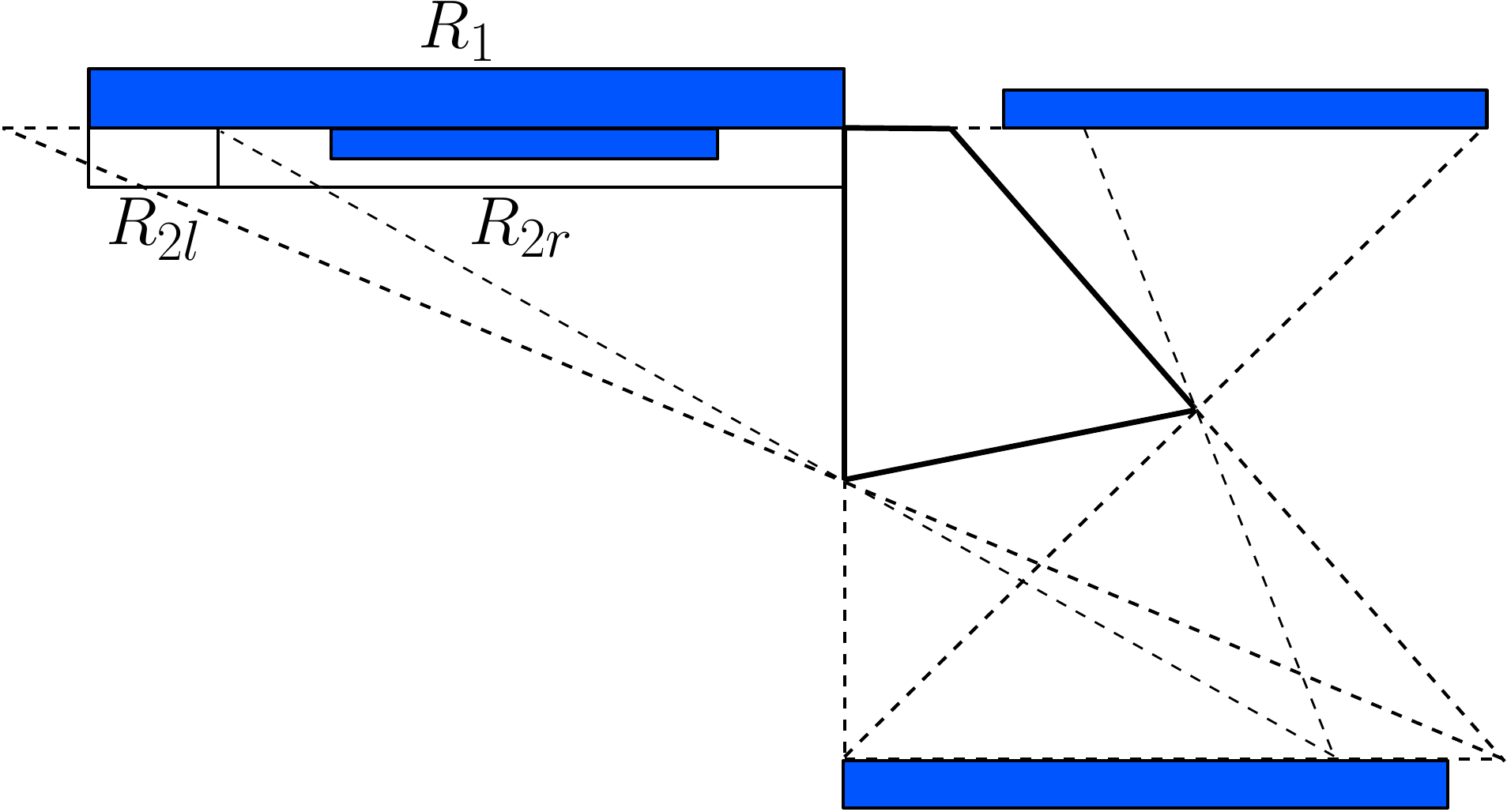}
	\includegraphics[scale=0.3]{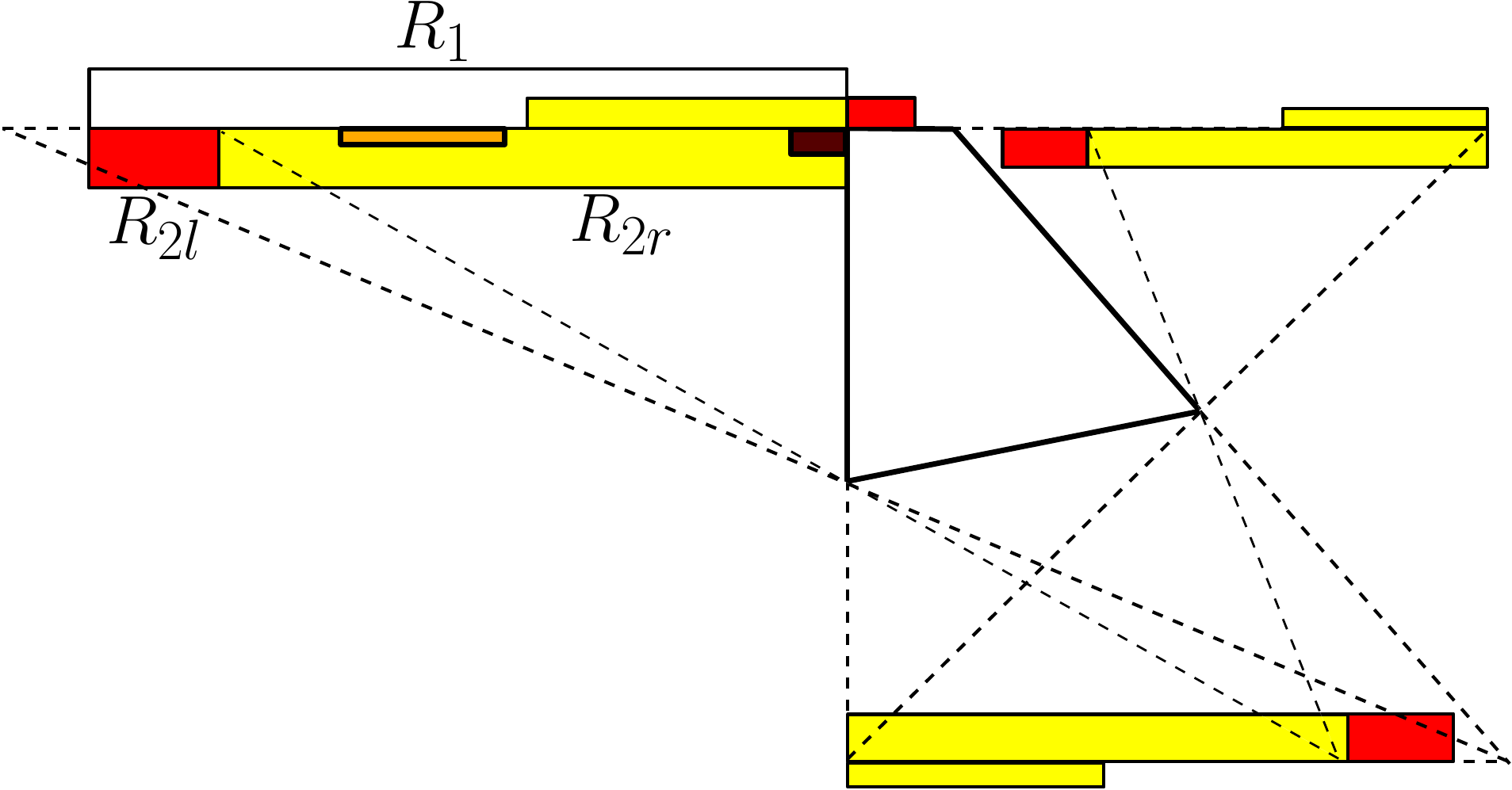}
	\caption{\label{fig:Iterates}Forward iterates of $R_{1}$; $T^{3}R_{1}\subset R_{2}$
		for $a=0.3$ (left) and $a=0.6$ (right)}
\end{figure}

\begin{figure}[h]
	\centering
		\includegraphics[scale=0.3]{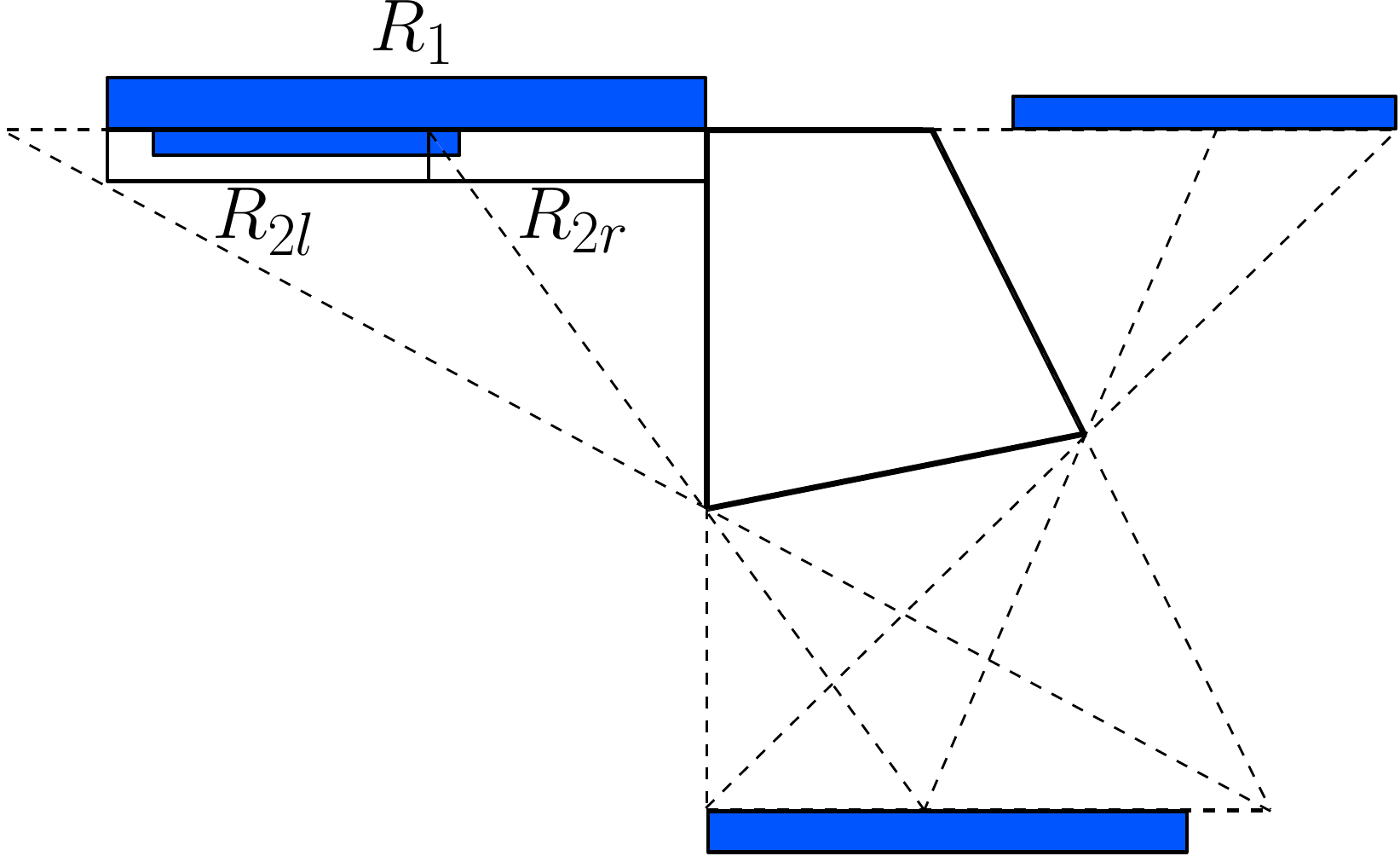}
		\includegraphics[scale=0.3]{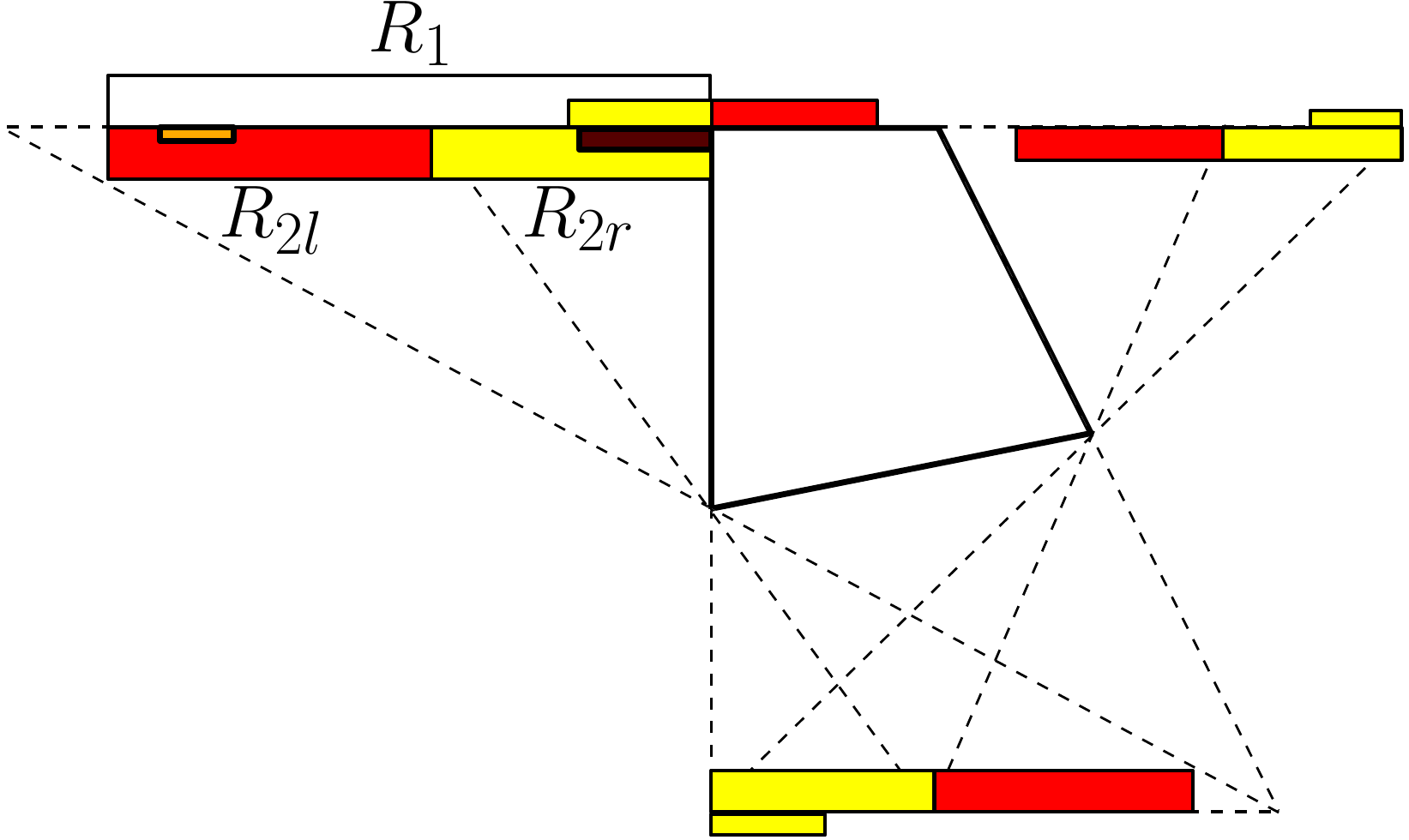}
	
	\caption{\label{fig:Iterates-1}Forward iterates of $R_{2}$; $T^{4}R_{2l}\subset R_{2r}$
		and $T^{6}R_{2r}\subset R_{2}$ for for $a=0.3$ (left) and $a=0.6$
		(right)}
\end{figure}

\begin{definition}[One-dimensional Systems]
	\label{def:1-dimensional_maps} Let $I_{\ep}$ be the interval obtained
	by the intersection $R_{1}\cap R_{2}$. Given a point $p=(p_{1},1)$
	on the interval $I_{\ep}$, pick $\delta>0$ small enough that $(p_{1},1+\delta)\in R_{1}$.
	Vertically project $\hat{f}(p_{1},1+\delta)$ onto $I_{\ep}$ and
	define it to be $f(p)$. This map $f:I_{\ep}\rightarrow I_{\ep}$
	is well-defined independent on the choice of $\delta$. Note that
	if we have two different $\ep_{1}>\ep_{2}$, the map $f$ coincides
	on the intersection $I_{\ep_{1}}\cap I_{\ep_{2}}=I_{\ep_{1}}$. Therefore
	we take the limit $\ep\rightarrow0^{+}$ and then $f$ is well-defined
	on the segment $\overline{EA}$. Similarly, using $\hat{g}$ we define
	$g$ on the segment $\overline{EA}$. 
	
	That is, the map $f$ ($g$, resp.) describes the dynamics of the
	upper rectangle $R_{1}$ (the lower rectangle $R_{2}$, resp.). \end{definition}
\begin{lemma}[Reduction to One-dimensional Systems]
	\label{lem:limit_set_reduction}Let $p\in X\cap(R_{1}\cup R_{2})$.
	Then $\omega_{T}(p)\cap(R_{1}\cup R_{2})=\omega_{T}(p)\cap\overline{EA}=\omega_{f}(p')\cup\omega_{g}(p'')$,
	where $p'$ and $p''$ are vertical projections of some forward $T$-iterates
	of $p$ which lies on $R_{1}$ and $R_{2}$, respectively. \end{lemma}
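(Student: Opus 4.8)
\emph{Plan.} The idea is to first reduce the forward $T$-orbit of $p$ to the interleaved orbits of the first-return maps $\hat f$ and $\hat g$ from Lemma \ref{lem:invariance}, and then replace $\hat f,\hat g$ by the interval maps $f,g$ using Definition \ref{def:1-dimensional_maps}. Since $p\in X\cap(R_1\cup R_2)$, Lemma \ref{lem:invariance}(1) shows the forward $T$-orbit of $p$ enters both $R_1$ and $R_2$; let $q_1=T^{m_1}p\in R_1$ and $q_2=T^{m_2}p\in R_2$ be the first such iterates, let $p',p''\in\overline{EA}$ be their vertical projections, and put $m=\min(m_1,m_2)$. Because $p\in X$ all further iterates avoid $S$, so $\hat f^{\,n}(q_1)$ and $\hat g^{\,n}(q_2)$ are defined for every $n\ge0$, with the convention fixed above at $I_\ep$ and at $\overline{BH}$. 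The subsequence of the $T$-orbit of $p$ lying in $R_1$ is then exactly $\{\hat f^{\,n}(q_1)\}_{n\ge0}$ and the subsequence lying in $R_2$ is exactly $\{\hat g^{\,n}(q_2)\}_{n\ge0}$; moreover, by the containments $T^3R_1\subset R_2$, $T^4R_{2l}\subset R_{2r}$, $T^3R_{2r}\subset R_1$ established in the proof of Lemma \ref{lem:invariance}, every iterate $T^np$ with $n\ge m$ lies in one of the seven transient rectangles $TR_1,\,T^2R_1,\,TR_{2l},\,T^2R_{2l},\,T^3R_{2l},\,TR_{2r},\,T^2R_{2r}$ or in $R_1\cup R_2$ itself.

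\emph{Transient pieces are irrelevant in the limit.} For $\ep>0$ small, each of the seven transient rectangles is disjoint from $R_1\cup R_2$ and from the segment $\overline{EA}$: geometrically they sit near $\overleftrightarrow{BC}$ or near $\overline{BH}\subset\overleftrightarrow{AB}$, and the latter has $x$-coordinates $\ge a\ge 0.3$ whereas $\overline{EA}$ has $x$-coordinates $\le 0$ (see Figures \ref{fig:Iterates} and \ref{fig:Iterates-1}). Being finitely many compact rectangles, they therefore lie at a definite positive distance $c$ from $R_1\cup R_2\cup\overline{EA}$. Consequently, if $z\in\omega_T(p)$ lies in $R_1\cup R_2$ or on $\overline{EA}$, then the iterates of $p$ converging to $z$ are, past some index, contained in $R_1\cup R_2$, so $z$ is a limit point of $\{\hat f^{\,n}(q_1)\}$ or of $\{\hat g^{\,n}(q_2)\}$. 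This gives $\omega_T(p)\cap(R_1\cup R_2)\subseteq\omega_{\hat f}(q_1)\cup\omega_{\hat g}(q_2)$ and the same inclusion with $\overline{EA}$ in place of $R_1\cup R_2$. Conversely, $\omega_{\hat f}(q_1)$ and $\omega_{\hat g}(q_2)$ are contained in $\omega_T(p)$, in $R_1$ and $R_2$ respectively (being $\omega$-limit sets of orbits inside those closed invariant rectangles), and in $\overline{EA}$ by the last assertion of Lemma \ref{lem:invariance}(2) ($y$-coordinates tend to $1$). Hence $\omega_T(p)\cap(R_1\cup R_2)=\omega_T(p)\cap\overline{EA}=\omega_{\hat f}(q_1)\cup\omega_{\hat g}(q_2)$.

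\emph{Descent to the interval.} By Lemma \ref{lem:invariance}(2) the map $\hat f$ preserves the vertical partition of $R_1$, so the $x$-coordinate of $\hat f^{\,n}(q_1)$ depends only on that of $q_1$ and, by Definition \ref{def:1-dimensional_maps}, equals the $x$-coordinate of $f^{\,n}(p')$, while the $y$-coordinate of $\hat f^{\,n}(q_1)$ tends to $1$. Thus $\omega_{\hat f}(q_1)$ lies on the line $y=1$ and its set of $x$-coordinates is exactly $\omega_f(p')$; identifying $\overline{EA}$ with that interval of $x$-coordinates gives $\omega_{\hat f}(q_1)=\omega_f(p')$, and the same argument gives $\omega_{\hat g}(q_2)=\omega_g(p'')$. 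Substituting into the equality of the previous paragraph yields $\omega_T(p)\cap(R_1\cup R_2)=\omega_T(p)\cap\overline{EA}=\omega_f(p')\cup\omega_g(p'')$, as claimed; independence of the auxiliary choices (the iterates $q_1,q_2$, the number $\delta$, and $\ep$) is built into Definition \ref{def:1-dimensional_maps}.

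\emph{Main obstacle.} The one nonformal ingredient is the separation estimate in the second paragraph: one must confirm, from the explicit family $(P,\l)$, that the seven transient rectangles stay a definite distance from $R_1\cup R_2\cup\overline{EA}$ for all $0.3\le a\le 0.6$ once $\ep$ is small, so that the orbit segments spent outside $R_1\cup R_2$ produce no spurious limit points in that set. This is a finite geometric computation of exactly the same nature as the containments verified in the proof of Lemma \ref{lem:invariance}, and everything else in the argument is formal manipulation of $\omega$-limit sets together with the properties of $\hat f,\hat g,f,g$ already recorded.
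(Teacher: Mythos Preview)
Your argument is correct and follows essentially the same route as the paper's: both hinge on the semiconjugacy $f\circ\mathrm{Proj}=\mathrm{Proj}\circ\hat f$ (and likewise for $g,\hat g$) together with the convergence of $y$-coordinates to $1$ from Lemma~\ref{lem:invariance}(2). The paper's proof is terser---it simply asserts that a subsequence converging to $x\in\overline{EA}$ can be taken inside $R_1\cup R_2$ and then passes to $\hat f$ and $f$---whereas you make this step explicit via the separation of the seven transient rectangles from $R_1\cup R_2$. That extra care is not strictly necessary (an interior point of $\overline{EA}$ already has $R_1\cup R_2$ as a neighborhood, so any convergent subsequence is eventually inside), but it does no harm and your ``main obstacle'' paragraph correctly flags it as a finite geometric check of the same flavor as the containments in Lemma~\ref{lem:invariance}.
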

\begin{proof}
	Let $x\in\omega_{T}(p)\cap\overline{EA}$. We have a sequence $T^{n_{1}}p,T^{n_{2}}p,...$
	on $R_{1}\cup R_{2}$ which converges to $x$. Then infinitely many
	of them lie on $R_{1}$ and hence we get a subsequence which we can
	as well write in the form $\hat{f}^{m_{1}}p,\hat{f}^{m_{2}}p,...$.
	That is, $x\in\omega_{\hat{f}}(p)$. We have $f\circ\mathrm{Proj}=\mathrm{Proj}\circ\hat{f}$
	where $\mathrm{Proj}$ is the projection map from $R_{1}\cup R_{2}$
	onto $\overline{EA}$. Therefore, $x\in\omega_{f}(p')$. 
	
	Let $x\in\omega_{f}(p')$. By our assumption, there is a point $p\in X\cap(R_{1}\cup R_{2})$
	that $p'$ is the projection of some $T^{n}p\in R_{1}$. From the
	sequence of points $f^{n_{1}}(p'),f^{n_{2}}(p'),...$ that converges
	to $x$, we get a sequence of points $\hat{f}^{m_{1}}p,\hat{f}^{m_{2}}p,$...
	that also converges to $x$. Since $\hat{f}$ is just some iterates
	of $T$, we get $x\in\omega_{T_{\l}}(p)\cap\overline{EA}$.
	
	Notice that this result still holds when we view $f$ and $g$ as
	discontinuous circle maps. This identification may create new periodic
	orbits which cannot be obtained from the dynamics of $T$, but such
	a periodic orbit, even when it exists, will not attract any regular
	points as $\omega$-limit sets remain the same.
\end{proof}

Now we prove an important lemma which shows that the 1-dimensional
systems $g$ and $f$ have well-defined rotation numbers.
\begin{lemma}[Rotation Number]
	\label{lem:rotation number} The rotation number of $g$ and $f$
	are well-defined mod 1. Moreover, these rotation numbers vary continuously
	in the parameter $a$. \end{lemma}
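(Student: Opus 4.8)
The plan is to realize $f$ and $g$ as maps in the class $\mathcal{S}$ from Section 2 — that is, to produce for each of them a strictly increasing, degree-one lift $\mathbb{R}\to\mathbb{R}$ — and then to invoke the Rhodes–Thompson theorems verbatim. First I would pin down the formulas. By Lemma 2.1(2), the first-return maps $\hat f,\hat g$ preserve the vertical foliation of $R_1,R_2$, so $f$ and $g$ are genuinely well-defined self-maps of the segment $\overline{EA}$ (parametrized by the $x$-coordinate, or equivalently by arclength $t\in[0,l]$ after rescaling to $[0,1]$). Each application of $T$ is, on these thin rectangles, an affine map of the plane that reflects through a vertex and scales by $-\lambda$ or $-\mu$; composing the three-or-four-step return itineraries identified in the proof of Lemma 2.1 (namely $T^3$ on $R_1$, and the split $T^4$ on $R_{2l}$, $T^6$ on $R_{2r}$), the induced maps $f$ and $g$ on $\overline{EA}$ are piecewise affine with finitely many pieces, each piece having slope $\pm\lambda^{k}\mu^{j}$ for small integers $k,j$, and — crucially — the net scaling factor on each branch has absolute value $<1$, since each return visits $\overline{AB}$ or $\overline{FG}$ more often with a $\lambda$-contraction than with a $\mu$-expansion (this is exactly the exponential-convergence statement at the end of the Lemma 2.1 proof). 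So $f$ and $g$ are orientation-preserving piecewise contractions of the interval with finitely many continuity intervals.

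Next I would close the interval up into a circle. An orientation-preserving piecewise contraction $\phi$ of $[0,1]$ with finitely many branches, viewed as a map $S^1\to S^1$ by identifying endpoints, has a lift $\Phi:\mathbb{R}\to\mathbb{R}$ with $\Phi(x+1)=\Phi(x)+1$; orientation-preservation on each piece plus the fact that the branches are laid out in order around the circle gives that $\Phi$ is (weakly) increasing, and one arranges strict monotonicity by filling the jumps — i.e. replacing $\Phi$ by a strictly increasing function agreeing with it off the (finite) jump set, which is permitted since, by the Rhodes–Thompson convention, the values at discontinuities are irrelevant. Here I need to check that $\phi$ has no orientation-reversing overlap — that the images of the continuity intervals are arranged consistently with a degree-one circle map; this follows from the explicit containments $T^3R_1\subset R_2$, $T^4R_{2l}\subset R_{2r}$, $T^3R_{2r}\subset R_1$ already established, which control exactly where each sub-rectangle lands. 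Once $f,g\in\mathcal{S}$, the first Rhodes–Thompson theorem gives that $\rho(f)$ and $\rho(g)$ are well-defined mod $1$.

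For continuity in $a$: the vertices $A,B,C,D$ depend affinely (or rationally, through the auxiliary points $E,F,G,H,I,I'$) on $a\in[0.3,0.6]$, $\lambda$ is fixed, and the return itineraries are locally constant in $a$ on this range (that is precisely what "this holds for all $0.3\le a\le0.6$" in the Lemma 2.1 proof secures — the combinatorics of the returns does not change). Hence the breakpoints and the affine branches of $f_a$ and $g_a$, and therefore the lifts $F_a$, $G_a$, vary continuously with $a$; in particular the filled graphs $\Gamma(F_a)$ vary continuously in the Hausdorff metric. The second Rhodes–Thompson theorem (Hausdorff-continuity of $\Gamma$ implies continuity of $\rho$) then yields that $a\mapsto\rho(f_a)$ and $a\mapsto\rho(g_a)$ are continuous.

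The main obstacle I anticipate is not any single hard estimate but the bookkeeping needed to be certain that the induced interval maps really are orientation-preserving and really extend to degree-one circle maps in class $\mathcal{S}$: one must verify that the finitely many affine branches, with their reflections through vertices, fit together monotonically around the circle rather than overlapping with opposite orientation. This reduces to tracking signs of the branch slopes ($\pm\lambda^k\mu^j$) and confirming the cyclic order of the branch images is preserved — a finite check, but one that relies essentially on the explicit containments of Lemma 2.1 and on $\epsilon$ being small, and it is the step where a careless argument could hide an error.
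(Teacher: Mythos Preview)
Your approach is the same as the paper's: put $f$ and $g$ into Rhodes--Thompson's class $\mathcal{S}$ and invoke their two theorems. The paper streamlines exactly the step you flag as the ``main obstacle'' by writing down an explicit formula for $g$ on $[0,1]$ (two affine branches with slopes $\lambda^{4}$ and $\lambda^{6}$, break at $1-h/l$) and then observing that membership in $\mathcal{S}$ reduces to injectivity, i.e.\ to the single inequality $\lim_{x\to 0^{+}}g(x)>g(1)$, equivalently that $T^{4}R_{2l}$ and $T^{6}R_{2r}$ are disjoint in $R_{2}$; continuity in $a$ then follows because the five data $(\lambda_{1},\lambda_{2},c_{1},c_{2},t)$ of a two-branch affine map vary continuously. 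One correction to your writeup: there is no $\mu$-expansion anywhere in the dynamics---every application of $T$ is $x\mapsto(1+\lambda)v-\lambda x$, so the branch slopes of $f$ and $g$ are simply $+\lambda^{k}$ with $k$ the (even) return time, and your worries about signs and about ``more $\lambda$'s than $\mu$'s'' are moot.
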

\begin{proof}
	We apply an orientation preserving affine map from the interval $\overline{EA}$
	onto $[0,1]$ so that $g$ and $f$ are now maps $[0,1]\rightarrow[0,1]$.
	Then it is a straightforward computation to show that 
	\begin{equation}
	g(x)=\begin{cases}
	\l^{4}(x-1+h/l)+1 & 0<x<1-h/l\\
	\l^{6}(x-1)+(1-y/l) & 1-h/l<x\leq1
	\end{cases}\label{eq:first_return}
	\end{equation}
	where
	\begin{align}
		l & =  \mu+(1-a)\mu^{2}=|\overline{EA}|\\
		h & =  l-a\mu^{3}=|\overline{IA}|\\
		y & =  \l(1+\l)(1+a\l^{2}-\l^{3})
	\end{align}
	are positive constants. For the rotation number of $g$ to be well-defined,
	we only need to check that it is injective (as it is strictly increasing
	on each interval of continuity), which is equivalent to showing that
	two sets $T^{4}R_{2l}$ and $T^{6}R_{2r}$ are disjoint subsets of
	$R_{2}$ (see Figure \ref{fig:Iterates-1} and also Figure \ref{fig:Graphs}
	for visual verifications of this fact). This is again equivalent to
	checking $\lim_{x\rightarrow0^{+}}g(x)>g(1)$ which is proved by a
	simple computation. We then define a lift 
	\[
	G(x)=\begin{cases}
	g(x) & 0<x\leq1-h/l\\
	g(x)+1 & 1-h/l<x\leq1
	\end{cases}
	\]
	on $(0,1]$ and extend to $\mathbb{R}$ by $G(x+1)=G(x)+1$.
	
	Next we argue continuity of the rotation number. Given five parameters
	$\l_{1}$, $\l_{2}$, $c_{1}$, $c_{2}$, and $t$, which all lie
	in $(0,1)$, we can associate the following function:
	\[
	H(x)=\begin{cases}
	\l_{1}x+c_{1} & 0<x\leq t\\
	\l_{2}x+c_{2}+1 & t<x\leq1
	\end{cases}.
	\]
	For each $0.3\leq a\leq0.6$, $G$ takes above form for appropriate
	parameters. If we fix four parameters and vary the remaining one continuously,
	(the closure of) the filled-graph of $H$ varies continuously in the
	Hausdorff topology. It is clear that varying $a$ continuously moves
	five parameters continuously, which implies that the rotation number varies continuously as well.
	
	Similarly, $f$ is a piecewise linear contraction with slopes $\l^{10}$
	and $\l^{6}$. A similar sequence of computations shows that $f$
	is injective and that the rotation number $\rho(f)$ is well-defined
	as well.
	\end{proof}

\begin{theorem}
	\label{thm:two_cases} For each $0.3\leq a\leq0.6$, when $\rho(g)$
	is rational, there is a unique attracting $T$-periodic orbit such
	that every orbit starting from $X\cap(R_{1}\cup R_{2})$ is asymptotic
	to, and when $\rho(g)$ is irrational, every orbit starting from $X\cap(R_{1}\cup R_{2})$
	is asymptotic to a unique invariant Cantor set.\end{theorem}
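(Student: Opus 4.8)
The plan is to reduce everything to the one-dimensional maps $f$ and $g$ and to feed them into the rotation theory of Section~\ref{sec:Rotation-Theory-for}. Fix $p\in X\cap(R_{1}\cup R_{2})$. By Lemma~\ref{lem:limit_set_reduction}, $\omega_{T}(p)\cap(R_{1}\cup R_{2})=\omega_{f}(p')\cup\omega_{g}(p'')$, which lies on the segment $\overline{EA}$. Because $R_{1}\cup R_{2}$ is re-entered within a bounded number $N$ of steps (proof of Lemma~\ref{lem:invariance}) while the $y$-coordinates of these return iterates tend to $1$, the whole $\omega$-limit set is recovered as
\[
\omega_{T}(p)=\bigcup_{k=0}^{N}T^{k}(\widetilde{C}),\qquad\widetilde{C}:=\{\,(x,1):x\in\omega_{f}(p')\cup\omega_{g}(p'')\,\}\subset\overline{EA}.
\]
Thus it suffices to identify $C:=\omega_{f}(p')\cup\omega_{g}(p'')$, to show it does not depend on $p$, and to determine its topological type: being a finite union of continuous injective images of $\widetilde{C}$ (branches of iterates of $T$), $\omega_{T}(p)$ then inherits that type, and it is $T$-invariant as any $\omega$-limit set is.

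By Lemma~\ref{lem:rotation number}, $f$ and $g$ are injective piecewise-affine contractions lying in the class $\mathcal{S}$ and carrying well-defined rotation numbers, so the theorems of Rhodes--Thompson and Brette apply to them. First note that $\rho(f)$ and $\rho(g)$ are rational simultaneously: if $\omega_{g}(\cdot)$ is finite then, since the $T$-orbit revisits $R_{2}$ every $\le N$ steps, $\omega_{T}(p)$ is finite, hence $\omega_{f}(\cdot)$ is finite and $\rho(f)$ is rational by Brette's theorem; the converse is symmetric. Now distinguish two cases. If $\rho(g)$ (and hence $\rho(f)$) is rational, then by Brette's theorem every $\omega_{f}(x)$ and $\omega_{g}(x)$ is a periodic orbit; since $f$ and $g$ are contractions, each such orbit is attracting, and one checks that it is in fact the same orbit for all $x$, so that $g$ (resp.\ $f$) has a unique periodic orbit. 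Then $C$ is a fixed finite set. If $\rho(g)$ (and hence $\rho(f)$) is irrational, then by Brette's theorem $\omega_{f}(x)=K_{f}$ and $\omega_{g}(x)=K_{g}$ are independent of $x$, and each of $K_{f},K_{g}$ is either $S^{1}$ or a Cantor set. The option $S^{1}$ is impossible: $g$ is a piecewise contraction with slopes at most $\l^{4}$, so $\mathrm{Leb}(g^{n}(S^{1}))\le\l^{4n}\to 0$ and therefore $\mathrm{Leb}(K_{g})=0$ (likewise for $K_{f}$). Hence $K_{f}$ and $K_{g}$ are Cantor sets, and $C=K_{f}\cup K_{g}$ --- a closed, perfect and nowhere-dense subset of the interval $\overline{EA}$ --- is again a Cantor set.

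It remains to read this back on $T$. In the rational case, $\omega_{T}(p)$ equals a fixed finite set independent of $p$; it is a $T$-periodic orbit in the generalized sense fixed in Section~\ref{sec:Attracting-Cantor-Sets}, it attracts every orbit issued from $X\cap(R_{1}\cup R_{2})$, and it is the unique periodic orbit with this property, as two distinct $\omega$-limit sets cannot share a point. In the irrational case, $\Lambda:=\omega_{T}(p)=\bigcup_{k=0}^{N}T^{k}(\widetilde{C})$ is independent of $p$, is $T$-invariant, is compact, is perfect (a finite union of perfect sets has no isolated point), and is totally disconnected (a finite union of closed totally disconnected sets --- each a homeomorphic image of the Cantor set $C$ --- is zero-dimensional by the sum theorem); hence $\Lambda$ is homeomorphic to a Cantor set, every orbit issued from $X\cap(R_{1}\cup R_{2})$ is asymptotic to it, and its uniqueness is immediate since all these orbits share the single $\omega$-limit set $\Lambda$.

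The main obstacle I anticipate is twofold. The first part is the two-dimensional--to--one-dimensional bookkeeping: recovering all of $\omega_{T}(p)$, and not merely its trace on $R_{1}\cup R_{2}$, from the interval maps, and checking that the resulting planar set has the stated topological type --- in particular that a finite union of affine images in the plane of a Cantor set stays totally disconnected. The second part is the uniqueness of the attracting periodic orbit in the rational case: here the mere existence of the rotation number does not suffice, and one has to exploit that $f$ and $g$ are genuine contractions, for instance by examining $g^{q}$ on each continuity piece. By comparison, excluding the alternative $K_{g}=S^{1}$ in the irrational case is immediate, as shown above.
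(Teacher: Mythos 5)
Your overall route --- reduce to the interval maps $f$ and $g$ via Lemma \ref{lem:limit_set_reduction}, invoke the Rhodes--Thompson/Brette dichotomy for the rotation number, rule out $\omega=S^{1}$ because a contraction cannot leave the whole circle invariant, and reassemble $\omega_{T}(p)$ from finitely many $T$-images of its trace on $\overline{EA}$ --- is the paper's route, and your treatment of the irrational case is essentially identical to the paper's (your Lebesgue-measure argument for excluding $S^{1}$ is a fine substitute for the paper's one-line version).

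The genuine gap is in the rational case, and you half-acknowledge it yourself. Brette's theorem only says that each $\omega_{g}(x)$ is \emph{some} $q$-periodic orbit; it does not say it is the \emph{same} orbit for all $x$, and for a general injective piecewise contraction on two intervals this can fail: the result of Nogueira--Pires cited in the paper allows up to two distinct attracting periodic orbits. Your proof of this step is the sentence ``one checks that it is in fact the same orbit for all $x$,'' which is exactly the assertion to be proved, and your closing remark that two distinct $\omega$-limit sets cannot share a point only gives uniqueness of an orbit that attracts everything, not the existence of one. The missing idea is the containment $g(I_{1})\subset I_{2}$ (the one-dimensional shadow of $T^{3}R_{1}\subset R_{2}$): it forces $g^{2}$ to have at most three continuity intervals and, inductively, $g^{r}$ to have at most $2r-1$; since each continuity branch of $g^{r}$ has slope less than $1$ and hence meets the diagonal at most once, $g^{r}$ has at most $2r-1$ fixed points, whereas two distinct orbits of minimal period $r\geq 2$ would require $2r$ (the case $r=1$ is checked directly from the graph). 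You do gesture at ``examining $g^{q}$ on each continuity piece,'' but without the branch count $2r-1<2r$ supplied by $g(I_{1})\subset I_{2}$, that examination does not close the argument. Everything else in your write-up matches the paper's proof.
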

\begin{proof}
	We have seen that any point in $R_{1}$ has an iterate in $R_{2}$
	and vice versa. Hence we know that a $f$-periodic orbit will induce
	a $g$-periodic orbit and vice versa. Therefore, $\rho(f)$ is rational
	if and only if $\rho(g)$ is rational. 
	
	Now assume that $\rho(g)$ is rational. Then the limit set of any
	point for $f$ and $g$ is finite, so by Lemma \ref{lem:limit_set_reduction},
	the limit set $\omega_{T}(p)$ for $p\in X\cap(R_{1}\cup R_{2})$
	is finite as well; this gives an attracting periodic orbit for $T$.
	Now given a periodic orbit for $T$ intersecting $R_{1}\cup R_{2}$.
	By restriction, we obtain a periodic orbit of $g$. Now we claim that
	there cannot be more than two periodic orbits. This is geometrically
	clear in the case when $\rho(g)=0$ (see Figure \ref{fig:Graphs}).
	So assume that the denominator of $\rho(g)$ is $r\geq2$. The map
	$g$ is a piecewise contraction on two intervals $I_{1}=[0,1-h/l]$
	and $I_{2}=[1-h/l,1]$, and since $g(I_{1})\subset I_{2}$ (which
	is equivalent to $T^{3}R_{1}\subset R_{2}$ in our original system),
	we see that $g^{2}=g\circ g$ is a piecewise contraction on at most
	three intervals. In the same way, $g^{r}$ is a piecewise contraction
	on at most $2r-1$ intervals ($r\geq2$). We know from the rotation
	theory that each periodic orbit of $g$ has (minimal) period $r$.
	Hence for two or more periodic orbits for $g$ to exist, $g^{r}$
	must have at least $2r$ fixed points. However, on each continuity
	interval for $g^{r}$, its graph can intersect the diagonal $y=x$
	at most once since the slope is less than $1$. 
	
	Next we assume that $\rho(g)$ is irrational. The $\omega$-limit
	sets of $g$ and $f$ cannot be the entire circle since they should
	be invariant under a contraction. Then $\omega_{f}(p')$ and $\omega_{g}(p'')$
	are both homeomorphic to Cantor sets for $p\in X\cap(R_{1}\cup R_{2})$,
	and $\omega_{T}(p)\cap\overline{EA}$ is homeomorphic to a Cantor
	set, being a union of two Cantor sets. Now, $\omega_{T}(p)$ is obtained
	by taking the union of finitely many iterates of $\omega_{T}(p)\cap\overline{EA}$,
	so it is a Cantor set as well. 
\end{proof}

The main result of \cite{Nogueira2012} says that $g$ can have at
most two attracting periodic orbits; in this case we have reduced
the number to 1 due to the extra condition $g(I_{1})\subset I_{2}$.
We are not claiming that this is the only periodic orbit; but it is
the only attracting one. We also note that if the rotation number
of $g$ is rational and has denominator $r$, then the corresponding
attracting $T$-periodic orbit has period greater than $3r$. In particular,
we have proved:

\begin{corollary}
	\label{cor:In-the-parameter}In the parameter range $0.3\leq a\leq0.6$,
	there are attracting periodic orbits for $T$ of arbitrarily high
	period intersecting $R_{1}\cup R_{2}$.\end{corollary}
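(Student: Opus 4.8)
The plan is to combine three facts already established: (i) by Theorem \ref{thm:two_cases}, whenever $\rho(g)$ is rational with denominator $r$, every orbit starting in $X\cap(R_{1}\cup R_{2})$ is asymptotic to a \emph{unique} attracting $T$-periodic orbit, and by the remark immediately preceding the corollary this orbit has period greater than $3r$; (ii) by Lemma \ref{lem:rotation number}, $\rho(g)$ depends continuously on the parameter $a\in[0.3,0.6]$; and (iii) by the foreshadowed computations of Theorem \ref{thm:cantorset}, $\rho(g)$ is not constant on $[0.3,0.6]$. Granting these, the argument is short: I would first invoke the intermediate value theorem for the continuous, non-constant function $a\mapsto\rho(g)(a)$. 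Since it takes at least two distinct values, its image is a nondegenerate interval $J\subset\mathbb{R}/\mathbb{Z}$ (or a genuine interval of positive length once lifted), and $J$ therefore contains rationals of arbitrarily large denominator in lowest terms.

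Next I would fix, for each $N$, a rational $p/q\in J$ with $q>N$ and $\gcd(p,q)=1$, and choose $a_{N}\in[0.3,0.6]$ with $\rho(g)(a_{N})=p/q$. For this parameter, Theorem \ref{thm:two_cases} yields an attracting $T$-periodic orbit meeting $R_{1}\cup R_{2}$, and by the period estimate its period exceeds $3q>3N$. Letting $N\to\infty$ produces attracting periodic orbits of unbounded period, which is exactly the assertion. One small point to be careful about: the rotation number is only defined mod $1$, so to speak of "arbitrarily large denominator" I should work with the specific lift $G$ constructed in the proof of Lemma \ref{lem:rotation number} (or simply note that the denominator of a rational in lowest terms is well-defined mod $1$), so that the continuity and non-constancy statements transfer unambiguously to a statement about denominators.

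I expect no real obstacle here; the corollary is essentially a packaging of Theorem \ref{thm:two_cases}, Lemma \ref{lem:rotation number}, and the examples in Theorem \ref{thm:cantorset}. The only mildly delicate point is logical ordering: the corollary is stated before Theorem \ref{thm:cantorset}, so I would either (a) state the corollary's proof as contingent on the non-constancy of $\rho(g)$ which is verified in Theorem \ref{thm:cantorset}, or (b) insert here the one-line verification that $\rho(g)(0.3)\neq\rho(g)(0.6)$ directly from the explicit formula \eqref{eq:first_return}, deferring the sharper computation of irrational values to later. Option (b) is cleaner and keeps the corollary self-contained; it requires only evaluating the two-branch affine map $g$ at the endpoints $a=0.3$ and $a=0.6$ and checking that the resulting rotation numbers differ, which is a finite computation of the sort the paper has already been performing.
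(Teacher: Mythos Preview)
Your proposal is correct and matches the paper's intended argument. The paper does not give a separate proof of this corollary at all: it simply writes ``In particular, we have proved:'' after noting that a rational rotation number with denominator $r$ yields an attracting $T$-orbit of period exceeding $3r$, and states the corollary. Your write-up makes explicit exactly the missing steps the paper leaves implicit (continuity of $\rho(g)$ from Lemma~\ref{lem:rotation number}, non-constancy from the computations in Theorem~\ref{thm:cantorset}, intermediate value theorem to obtain rationals of arbitrarily large denominator), and you have also correctly flagged the logical ordering issue that the paper glosses over; your option (b) is precisely how the paper's proof of Theorem~\ref{thm:cantorset} proceeds, so either resolution is fine.
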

\begin{theorem}
	\label{thm:cantorset}For uncountably many choices of $a$, there
	exists a Cantor set which attracts all points in $R_{1}\cup R_{2}$.\end{theorem}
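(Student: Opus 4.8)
The plan is to combine the continuity of the rotation number from Lemma~\ref{lem:rotation number} with the intermediate value theorem, so that the whole argument reduces to exhibiting \emph{two} parameters at which the rotation number of $g$ is different. Fix the lift $G$ of $g$ built in the proof of Lemma~\ref{lem:rotation number}, and set $\phi(a):=\rho(G)\in\mathbb{R}$; by Lemma~\ref{lem:rotation number} the map $a\mapsto\phi(a)$ is continuous on $[0.3,0.6]$. Suppose we have shown $\phi(a_{0})\neq\phi(a_{1})$ for some $a_{0},a_{1}$ in this range. Then, by the intermediate value theorem applied to $\phi$ on the subinterval between $a_0$ and $a_1$, the image $\phi([0.3,0.6])$ contains the nondegenerate interval $J$ with endpoints $\phi(a_{0})$ and $\phi(a_{1})$, hence contains uncountably many irrationals. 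Let $U=\{a\in[0.3,0.6]:\phi(a)\notin\mathbb{Q}\}$. If $U$ were countable then $\phi(U)$ would be countable, yet $\phi(U)\supseteq J\setminus\mathbb{Q}$, which is uncountable --- a contradiction. So $U$ is uncountable, and for every $a\in U$ the rotation number $\rho(g)$ is irrational, so Theorem~\ref{thm:two_cases} gives an invariant Cantor set attracting all of $X\cap(R_{1}\cup R_{2})$. Finally, with the conventions on singular points adopted in Sections~\ref{sec:Rotation-Theory-for} and~\ref{sec:Attracting-Cantor-Sets} (under which $f$ and $g$ become everywhere-defined maps of class $\mathcal{S}$), Lemma~\ref{lem:limit_set_reduction} together with Brette's $\omega$-limit theorem applied to $g$ upgrades this to: the same Cantor set attracts \emph{every} point of $R_{1}\cup R_{2}$.

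It remains to compute $\rho(g)$ at two parameter values and check that they disagree; the natural candidates are the endpoints $a=0.3$ and $a=0.6$. For each of them $\l=0.8$ is fixed, so \eqref{eq:first_return} presents $g\colon[0,1]\to[0,1]$ as an explicit piecewise affine contraction with two branches $g_{1},g_{2}$, an explicit break point $1-h/l$, and slopes that are explicit powers of $\l$. Because $g$ contracts on each of its two continuity intervals $I_{1},I_{2}$, a periodic orbit with itinerary $w\in\{1,2\}^{q}$ (legal itineraries avoid the word $11$, since $g(I_{1})\subset I_{2}$) is exactly the unique fixed point of the affine composition $g_{w}=g_{w_{q}}\circ\cdots\circ g_{w_{1}}$, \emph{provided} that this fixed point and its first $q-1$ iterates land in the prescribed intervals; the corresponding rotation number is $\rho(g)=\#\{i:w_{i}=2\}/q$, the number of times the chosen lift adds $1$ along a period. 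So for each endpoint I would (i) iterate $g$ from a convenient point to read off a candidate itinerary $w$ and the fraction $\#\{i:w_{i}=2\}/q$, and (ii) solve the single linear equation $g_{w}(x)=x$ in closed form, then verify --- with exact arithmetic in $\l=4/5$ --- the finitely many strict inequalities placing $x,g(x),\dots,g^{q-1}(x)$ in their intervals. This should produce rotation numbers $p_{0}/q_{0}$ at $a=0.3$ and $p_{1}/q_{1}$ at $a=0.6$ with $p_{0}/q_{0}\neq p_{1}/q_{1}$, which is exactly what the first paragraph requires.

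The main obstacle is step (ii): choosing itineraries that really are realized by the fixed points one writes down, and then grinding through the exact rational-arithmetic verification of the strict inequalities $g^{j}(x)\in I_{w_{j+1}}$ for $0\le j<q$ together with the contraction estimate that guarantees uniqueness and attractivity. This is routine but tedious, and its length is governed by how large the denominators $q_{0},q_{1}$ must be taken. Should the two endpoints happen to give the same rotation number, one simply repeats the same computation at other pairs of parameters in $[0.3,0.6]$ until a disagreeing pair is found. Once such a pair is in hand, the cardinality argument of the first paragraph applies verbatim and the theorem follows.
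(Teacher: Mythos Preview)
Your overall strategy is exactly the paper's: use continuity of $a\mapsto\rho(g)$ (Lemma~\ref{lem:rotation number}) together with the intermediate value theorem, reducing everything to exhibiting two parameter values at which $\rho(g)$ differs, then invoke Theorem~\ref{thm:two_cases}. Your cardinality argument in the first paragraph is a correct and slightly more explicit version of what the paper leaves implicit.

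Where you diverge is in the computational step, and here you work harder than necessary. You propose to detect the rotation number at $a=0.3$ and $a=0.6$ by guessing an itinerary $w\in\{1,2\}^{q}$, solving $g_{w}(x)=x$, and verifying $q$ interval-membership inequalities in exact arithmetic. The paper instead exploits the special value $\rho(g)=0$: since $g$ is increasing on each of its two continuity intervals with slopes $<1$, it has a fixed point if and only if the graph crosses the diagonal on one of them, which in turn reduces to the single comparison $g(1-h/l)\lessgtr 1-h/l$. At $a=0.3$ this inequality goes one way (fixed point exists, $\rho(g)=0$); at $a=0.6$ it goes the other way and one checks there is no fixed point, hence $\rho(g)\neq 0$. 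So the paper needs only one inequality per endpoint rather than $q$ of them, and never has to identify the actual rotation number at $a=0.6$. Your method is more general and would certainly work, but the paper's shortcut is worth knowing: to prove nonconstancy of a rotation number it suffices to distinguish ``has a fixed point'' from ``has no fixed point,'' which for a two-branch increasing contraction is a one-line check.
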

\begin{proof}
	Let us compute $\rho(g)$ when $a=0.3$ and $a=0.6$. In the former
	case, we have $g(1-h/l)<1-h/l$. Therefore, by the intermediate value
	theorem, there is a fixed point and $\rho(g)=0$. When $a=0.6$, $g(1-h/l)>1-h/l$
	and there are no fixed points which means $\rho(g)\neq0$. (Figure
	\ref{fig:Graphs}) Therefore, there are uncountably many values of
	$a$ between 0.3 and 0.6 such that $\rho(g)$ and $\rho(f)$ are irrational.
\end{proof}

\begin{figure}[h]
	\centering
		\includegraphics[scale=0.3]{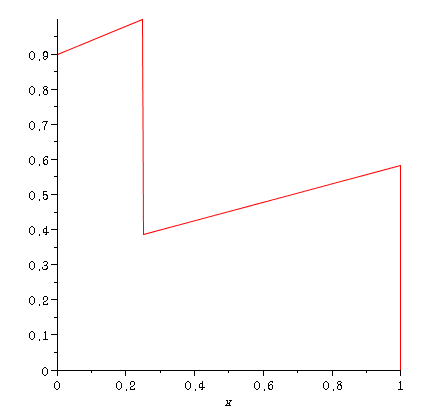}
		\includegraphics[scale=0.3]{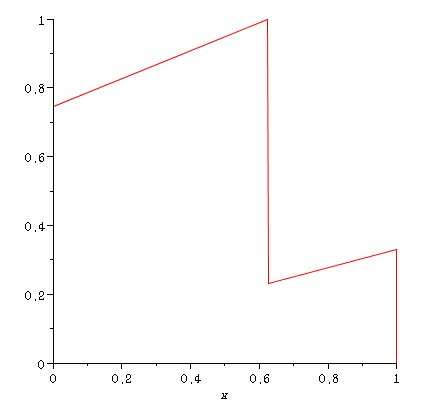}

	\caption{\label{fig:Graphs}Graphs of $g$ for $a=0.3$ and $0.6$ (left, right)}
\end{figure}

\section{Discussions\label{sec:Discussions}}

\subsection{The Triangular Transition}

In this subsection, we will first discuss the behavior of the map
$T$ when $\l$ is close to $0.8$ near the singular line $\overleftrightarrow{AB}$,
and state the results of previous section for more general parameters.

The outer billiards map, either with or without contraction, is invariant
under affine transformations of the plane. That is, if a convex polygon
$P$ is mapped to another polygon $Q$ by an orientation-preserving
affine map, the dynamics of $T$ (either with or without contraction)
outside $P$ and $Q$ are conjugated by a linear map. Up to affine
transformations, any convex quadrilateral is represented by a pair
of reals $(a,b)$ where $0<a$, $b<1$, and $ab<1$. This pair represents
the quadrilateral with vertices $A=(0,1)$, $B=(a,1)$, $C=(1,b)$,
and $D=(0,0)$. Moreover, we can cyclically rename the vertices; the
permutation $(A,B,C,D)$$\rightarrow$$(D,A,B,C)$ correspond to the
transformation $(a,b)\mapsto(1-b,(1-1/a)/(1-b))$ and one can easily
check from this that we can further assume $0<a<1$ and $0<b<1$ without
loss of generality.

Let us say that a periodic orbit is\textit{ triangular} if its period
is 3. If we consider quadrilaterals satisfying $a+b\leq1$, then two
triangular periodic orbits exist in disjoint ranges of $\l$. It can
be proved by a simple computation. Furthermore, let us say that a
periodic orbit is \textit{regular} if every point lies on the set
$X$. Otherwise, a periodic orbit will be called \textit{degenerate}.
While a regular periodic orbit is necessarily locally attracting,
a degenerate periodic orbit of odd period is never locally attracting. 
\begin{proposition}
	\label{1-periodic orbits}If $a+b\leq1$, the quadrilateral $(a,b)$
	has the regular triangular periodic orbit skipping the vertex $A$
	(resp. vertex $B$) precisely in the range $a<\l<1-b$ (resp. $1-b<\l<1$).
\end{proposition}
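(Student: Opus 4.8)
The plan is to set up coordinates explicitly for a candidate triangular orbit and solve for the condition that it closes up, then check exactly when all three points avoid the singular set $S$. A period-3 orbit of $T$ that skips vertex $A$ means the three successive reflections happen at vertices $B$, $C$, $D$ (in some cyclic order), so I would parametrize the orbit by one of its points, say $w_1$, lying in the region whose reflection vertex is $B$. Writing the reflection-with-contraction at a vertex $v$ as the affine map $x \mapsto v + \lambda(v - x) = (1+\lambda)v - \lambda x$, the return map after reflecting at $B$, then $C$, then $D$ is a composition of three such affine maps, hence itself an affine map $x \mapsto M x + c$ with $M = -\lambda^3 \cdot (\text{identity})$ — actually $M = (-\lambda)^3 I = -\lambda^3 I$ since each reflection contributes $-\lambda I$ to the linear part. (For the vertex-$B$-skipping orbit, replace $B$ by $A$ in the list, i.e. reflect at $A$, $C$, $D$.) Because the linear part is the scalar $-\lambda^3$ with $|-\lambda^3|<1$, the affine return map has a unique fixed point, which I can write down in closed form in terms of $a$, $b$, $\lambda$ by solving $(I + \lambda^3 I)w_1 = c$. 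This gives an explicit candidate periodic orbit $(w_1, w_2, w_3)$ for every $(a,b,\lambda)$.

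Next I would determine the region of $\mathbb{R}^2\setminus P$ in which $T$ reflects about a prescribed vertex: these are the angular sectors bounded by the singular rays $\overrightarrow{A_iA_{i+1}}$. The orbit is the \emph{regular} triangular orbit skipping $A$ precisely when $w_1$ lies in the (open) sector with reflection vertex $B$, $w_2$ in the sector with vertex $C$, and $w_3$ in the sector with vertex $D$, and none of the $w_i$ lies on a singular ray. Plugging the closed-form coordinates of the $w_i$ into the linear inequalities defining these three sectors, each inequality becomes a sign condition on a polynomial (in fact affine or low-degree) expression in $\lambda$ with coefficients depending on $a$, $b$. The claim is that, under the standing hypothesis $a + b \le 1$, the conjunction of all these inequalities is equivalent to the single interval condition $a < \lambda < 1-b$; and symmetrically, the orbit skipping $B$ exists exactly for $1-b < \lambda < 1$. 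So the remaining work is to carry out this elimination: show each sector inequality holds throughout $(a, 1-b)$ and fails (the orbit leaving the correct sector, or a point hitting $S$) at the endpoints, using $a+b\le 1$ to guarantee $(a,1-b)$ is a nonempty subinterval of $(0,1)$ and to control the signs.

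I expect the main obstacle to be purely the bookkeeping of the sector inequalities: there are three points and each sits in a wedge cut out by two rays, so a priori up to six inequalities per orbit, and one must verify that after substituting the fixed-point formulas they collapse to the clean endpoints $\lambda = a$ and $\lambda = 1-b$ rather than to some messier algebraic conditions. The hypothesis $a+b\le 1$ is what makes this collapse happen — geometrically it controls which of the two triangular orbits is "active" and ensures the two parameter windows $a<\lambda<1-b$ and $1-b<\lambda<1$ are disjoint and exhaust $(a,1)\setminus\{1-b\}$ — so I would keep track of exactly where that inequality is used. Finally, I would note the boundary behavior: at $\lambda = a$, $\lambda = 1-b$, or $\lambda = 1$ the fixed point $w_i$ lands on a singular ray, producing the degenerate triangular orbit, which (as remarked before the proposition) of odd period is never locally attracting; this confirms that "regular" forces the strict inequalities and completes the proof.
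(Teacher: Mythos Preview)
The paper does not actually supply a proof of this proposition; the surrounding text merely asserts that ``It can be proved by a simple computation.'' Your proposal is exactly that computation---solve for the unique fixed point of the three-fold affine composition (linear part $-\lambda^{3}I$, so a contraction with a unique fixed point), write the three points explicitly in terms of $a,b,\lambda$, and then test them against the linear inequalities defining the open reflection sectors---and it is correct. This is plainly what the author had in mind, so there is nothing to compare.

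One small caution on the bookkeeping: be sure the cyclic order in which you compose the vertex reflections matches the paper's convention (clockwise vertices $A,B,C,D$ with $P$ on the \emph{left} of the ray from $x$ through the reflection vertex); reversing the order produces a fixed point that fails all sector tests. Also, at the endpoint $\lambda=1-b$ you should find that \emph{two} of the three points land on the line $\overleftrightarrow{AB}$ simultaneously (as the paper remarks just after the proposition), which is a useful check on your formulas.
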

That is, at the critical value $\l=1-b$, two triangular periodic
orbits both exist but in degenerate forms; each periodic orbit has
two points on the line $\overleftrightarrow{AB}$. Also, we note that
in the range $\l\in(a,1)\backslash\{1-b\}$, some neighborhood of
the singular ray extending $\overline{AB}$ is attracted to the triangular
periodic orbit. However, when $\l=1-b$, both of them are not attracting
(since the period is odd), so it is believable that a new attractor
should appear to compensate for this loss. We have seen in Corollary
\ref{cor:In-the-parameter} and Theorem \ref{thm:cantorset} that
such a new attractor is either a degenerate periodic orbit of high
period or a Cantor set. 

To illustrate this by an example, let us consider the quadrilateral
$P=(0.5,0.2)$. In this specific case, in the range $a=0.5<\l<0.8=1-b$,
the whole domain is asymptotic to the triangular orbit skipping $A$
(left one in Figure \ref{fig:bif}), while in the range $0.8<\l<0.85$,
the whole domain is attracted to the triangular orbit skipping $B$.
(right one in Figure \ref{fig:bif}) At the critical value $\l=0.8$,
a degenerate periodic orbit of period 10 (one of the ten points lies
near the midpoint of the edge $\overline{AB}$) suddenly appears and
attracts the whole domain. In general for different quadrilaterals,
a few other periodic orbits can simultaneously exist near the bifurcation
value $\l=1-b$ but the ones that exist for values of $\l$ smaller
than $1-b$ are qualitatively different from the ones that exist for
$\l$ greater than $1-b$. See \cite{Jeong2013} for more explanations
and pictures regarding this issue.

We state the most general parameter range in which considerations
from the previous section apply without any extra effort.
\begin{theorem}
	Let $(a,b)$ satisfy $0<a,b<1$, $a+b\leq1$, $a<1+\l-\l^{3}-\l^{4}$.
	Then with $P=(a,b)$ and $\l=1-b$, $g$ is likewise well-defined
	piecewise contraction given by the formula \ref{eq:first_return}
	whose rotation number is well-defined, and when $\rho(g)$ is rational,
	at most two periodic orbits exist and every point in $X\cap(R_{1}\cup R_{2})$
	is asymptotic to one of them, and when $\rho(g)$ is irrational, every
	point in $X\cap(R_{1}\cup R_{2})$ is asymptotic to a unique Cantor
	set.
\end{theorem}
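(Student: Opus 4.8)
The plan is to show that the general statement follows by re-running the argument of Section \ref{sec:Attracting-Cantor-Sets} verbatim, with the only nontrivial point being that the construction of the invariant rectangles (Definition \ref{def:inv_rectangle} and Lemma \ref{lem:invariance}) still goes through under the stated hypotheses $0<a,b<1$, $a+b\le1$, and $a<1+\l-\l^{3}-\l^{4}$, with $\l=1-b$. First I would recall the geometric setup: with $P=(a,b)$ and $\l=1-b$ we are exactly at the critical value at which both triangular periodic orbits (skipping $A$ and skipping $B$) coexist in degenerate form (Proposition \ref{1-periodic orbits}), so the singular line $\overleftrightarrow{AB}$ is the natural place to look for the attractor. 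I would re-derive the auxiliary points $F,G,E,H,I',I$ by the same prescription as in the specific family — the condition $\l=1-b$ is precisely what makes $\overleftrightarrow{FG}$ parallel to $\overleftrightarrow{AB}$, so nothing changes qualitatively — and then define $R_1,R_2,I_\ep$ as before.

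Next I would verify the three crucial containments $T^3R_1\subset R_2$, $T^4R_{2l}\subset R_{2r}$, and $T^3R_{2r}\subset R_1$. As in the proof of Lemma \ref{lem:invariance}, for $\ep$ small the whole rectangle $T(R_1\cup R_2)$ reflects at the vertex $C$ (this uses only that $b>0$, i.e.\ $C$ lies strictly below $\overleftrightarrow{AB}$, plus smallness of $\ep$), so $T^2(R_1\cup R_2)$ is a rectangle straddling $\overline{BH}$; the containment $T^3R_1\subset R_2$ then reduces to checking that the reflected-and-contracted image of the $x$-coordinate of $H$ lands inside the segment $R_1\cap R_2$. This inequality is exactly where the hypothesis $a<1+\l-\l^3-\l^4$ enters — it is the analogue of "this holds for all $0.3\le a\le0.6$" in the original proof, now written as an explicit inequality in $(a,\l)$. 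The containment $T^3R_{2r}\subset R_1$ is automatic from $T^3R_1\subset R_2$ as before, and $T^4R_{2l}\subset R_{2r}$ is one more computation; I would note that the formula \eqref{eq:first_return} for $g$, together with the constants $l,h,y$ expressed now with $\mu=\l^{-1}$ and general $a$, is what packages these containments, and that the condition $a+b\le1$ guarantees we are in the regime where the triangular orbit skipping $B$ is the relevant one (so that the combinatorics of which vertex each rectangle reflects on is the one used in deriving \eqref{eq:first_return}).

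Once invariance holds, Lemma \ref{lem:limit_set_reduction} (reduction to the one-dimensional maps $f,g$) is purely formal and carries over word for word, as does Lemma \ref{lem:rotation number}: $g$ is again a two-branch piecewise-affine injection with slopes $\l^4$ and $\l^6$, injectivity being equivalent to $\lim_{x\to0^+}g(x)>g(1)$, a single inequality one checks from \eqref{eq:first_return}; the lift $G$ has the form $H(x)$ with five parameters in $(0,1)$ depending continuously on $a$, so $\rho(g)$ is well-defined mod $1$ and continuous in $a$. Finally Theorem \ref{thm:two_cases} applies: the key extra input there is $g(I_1)\subset I_2$ (equivalent to $T^3R_1\subset R_2$), which we have just re-established, so when $\rho(g)=p/q$ one gets at most two periodic orbits (the counting argument via $g^r$ having at most $2r-1$ continuity intervals and slope $<1$ uses nothing special about the parameters), and when $\rho(g)$ is irrational the $\omega$-limit set, being invariant under a contraction, cannot be all of $S^1$ and is therefore a Cantor set, which pulls back through finitely many iterates of $T$ to a Cantor attractor in $R_1\cup R_2$.

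The main obstacle is the containment step: one must check that the three inequalities $T^3R_1\subset R_2$, $T^4R_{2l}\subset R_{2r}$, $T^3R_{2r}\subset R_1$ — which in the original proof were verified numerically over $[0.3,0.6]$ — hold throughout the two-parameter region cut out by $0<a,b<1$, $a+b\le1$, $a<1+\l-\l^3-\l^4$ with $\l=1-b$. I expect the decisive one to be $T^3R_1\subset R_2$, whose $\ep\to0$ limit is an explicit algebraic inequality in $(a,b)$; the hypothesis $a<1+\l-\l^3-\l^4$ should be exactly this condition (or imply it), and the remaining two containments should follow either automatically or from $a+b\le1$. A secondary, purely bookkeeping difficulty is making sure that the convention for which vertex each of the rectangles $T^k(R_i)$ reflects on — and hence the derivation of \eqref{eq:first_return} — remains valid in the full region; this is where $a+b\le1$ does its work, ensuring the relevant triangular orbit is the $B$-skipping one so that the local picture near $\overline{AB}$ matches Figures \ref{fig:Iterates}–\ref{fig:Iterates-1}. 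Everything downstream of invariance is soft and transfers with no change.
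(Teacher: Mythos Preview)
Your proposal is correct and matches the paper's approach exactly: the paper does not supply a separate proof of this theorem, presenting it instead as ``the most general parameter range in which considerations from the previous section apply without any extra effort,'' and confirms in the surrounding text that the inequality $a<1+\l-\l^{3}-\l^{4}$ is precisely what guarantees $T^{3}R_{1}\subset R_{2}$ ``among other things.'' Your identification of this containment as the decisive check, and of everything downstream (Lemmas \ref{lem:limit_set_reduction}, \ref{lem:rotation number}, Theorem \ref{thm:two_cases}) as soft and transferable, is exactly the intended reading.
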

We show in Figure \ref{fig:Vertical-axis:-,} how the rotation number
$\rho(g)$ varies in the parameter plane $0<a,b<1$, $a+b\leq1$,
$a<1+\l-\l^{3}-\l^{4}$. Without the inequality $a<1+\l-\l^{3}-\l^{4}$
(which guarantees $T^{3}R_{1}\subset R_{2}$ among other things),
the dynamics is quite different and more complicated. The central
``band'' corresponds to $\rho=0.5$, and the large upper and lower
regions correspond to $\rho=0$. If we fix $b\in(0,0.5)$ and then
the graph of the function $a\mapsto\rho(g_{a,b})$ (where $g_{a,b}$
is the map $g$ with the polygon $P=(a,b)$) is a devil's staircase.

Next, let us take a look at cases where the rotation numbers are rational. In Figure \ref{fig:bif-1}, the bifurcation attracting periodic orbits
are drawn for $P=(a,b=0.15)$ where $a=$0.30, 0.33, 0.34, 0.35, 0.40,
0.50 in clockwise order starting from the top left picture. They correspond
to rotation numbers 1, 6/7, 4/5, 3/4, 2/3, 1/2, respectively. One
can easily see that the period increases in proportion to the denominator
of the rotation number.

\begin{figure}[h]
\centering
		\includegraphics[scale=0.15]{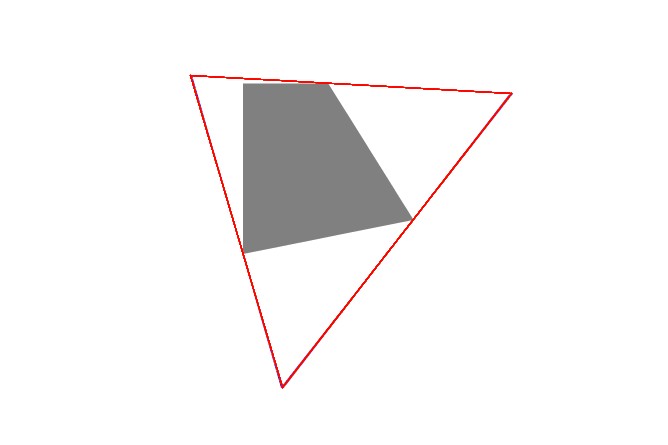}
		\includegraphics[scale=0.15]{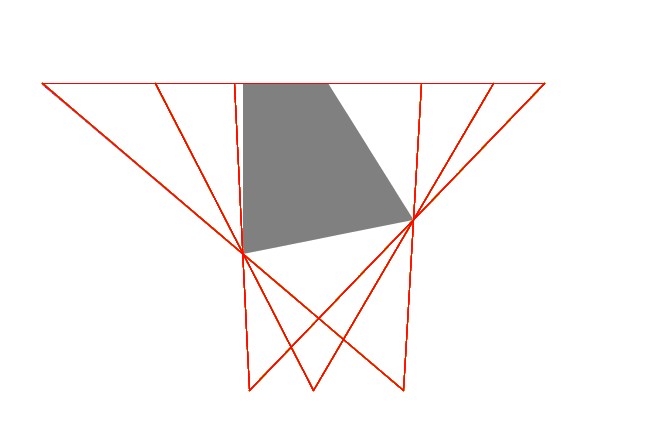}
		\includegraphics[scale=0.15]{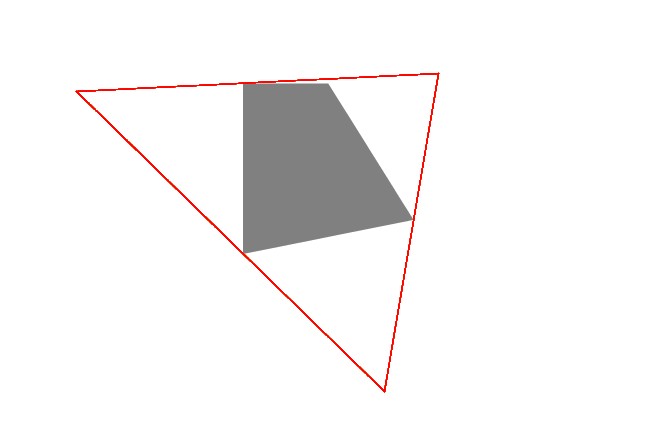}

	\caption{\label{fig:bif}Attracting periodic orbits for $P=(0.5,0.2)$ when
		$\l=0.75,0.8,0.85$ (from left to right)}
\end{figure}

\begin{figure}[h]
		\centering
		\includegraphics[scale=0.4]{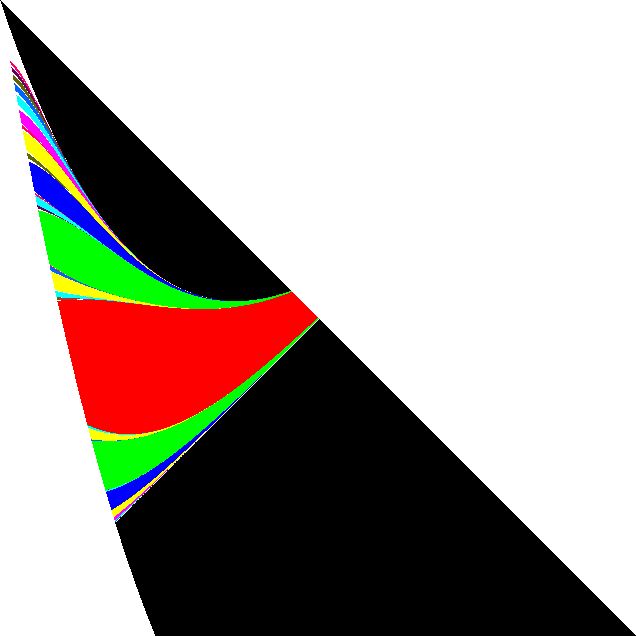}

		\caption{\label{fig:Vertical-axis:-,} Rotation number $\rho(g)$ as a function
		of $a$ and $b$; the top left corner is $a=b=0$, and the horizontal
		axis is $b$.}
\end{figure}

\begin{figure}[h]
\centering
		\includegraphics[scale=0.13]{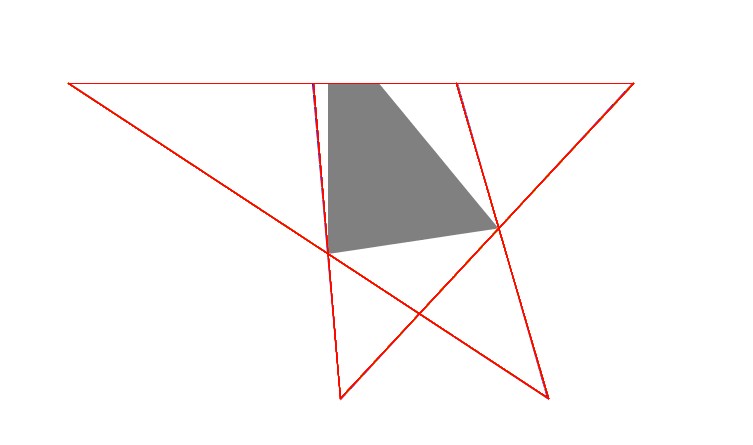}
		\includegraphics[scale=0.13]{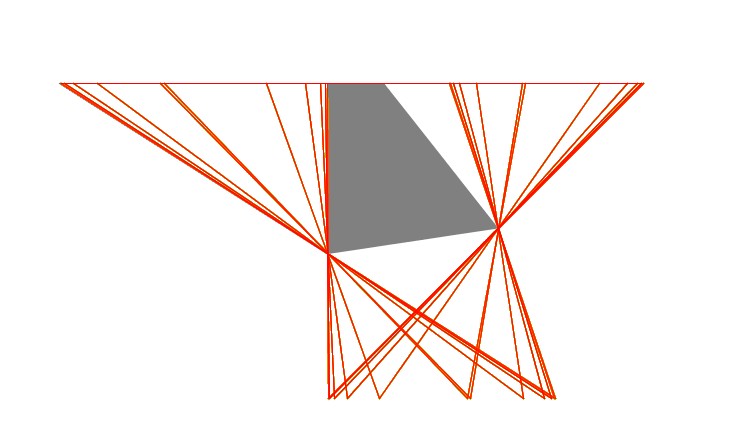}
		\includegraphics[scale=0.13]{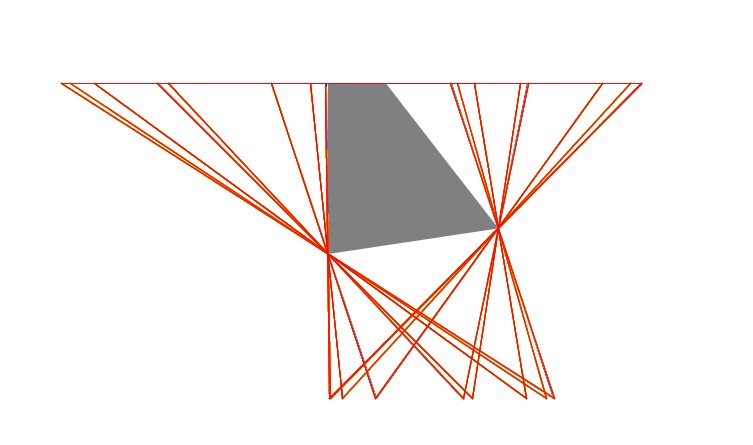}
	
\centering
		\includegraphics[scale=0.13]{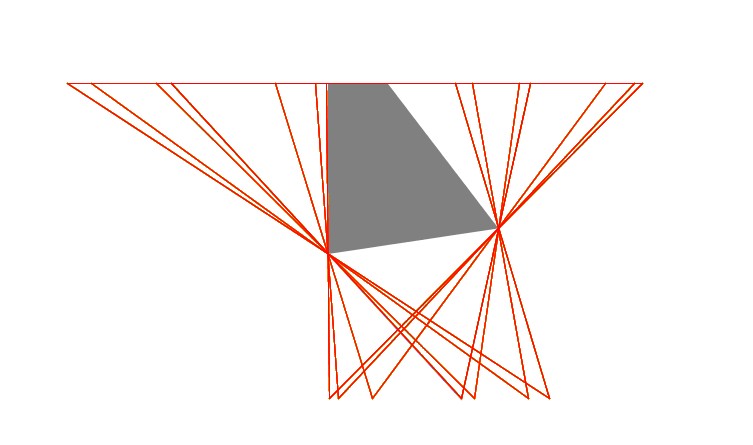}
		\includegraphics[scale=0.13]{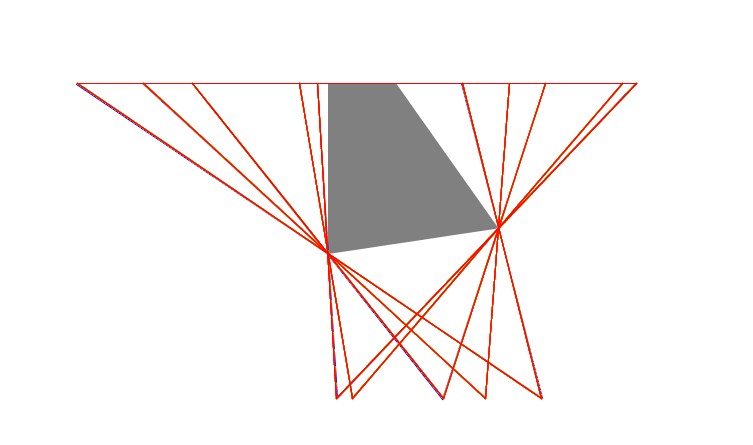}
		\includegraphics[scale=0.13]{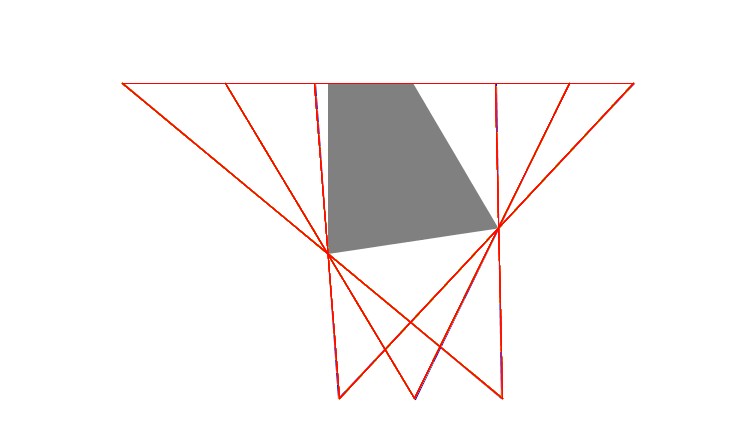}
	
	\caption{\label{fig:bif-1} Attracting periodic orbits for various rational	rotation numbers}
\end{figure}

\subsection{\label{sub:Global-Uniqueness}Global Uniqueness and Unique Ergodicity}

One might suspect that the set of points in $\mathbb{R}^{2}\backslash P$
that are asymptotic to the attractor constructed above is rather small.
Indeed, experiment suggests that for most pairs of $(a,b)$ considered
in the previous subsection, every regular point $p\in\mathbb{R}^{2}\backslash P$
has a forward iterate in $R_{1}\cup R_{2}$. Or equivalently, the
inverse iterates of $R_{1}\cup R_{2}$ covers the whole domain $\mathbb{R}^{2}\backslash P$.
While this seems to hold in particular for the 1-parameter family
of quadrilaterals considered the previous section, we will only prove
this statement for the parameters $0.4\leq a\leq0.5$ and $b=0.4$
(hence $\l=0.6$). Later it will become clear why this statement is
easier to verify for small $\l$. 

For each pair $(a,b)$, we denote the unique attractor for points
in $X\cap(R_{1}\cup R_{2})$ by $K_{(a,b)}$.
\begin{theorem}
	In the parameter range $0.4\leq a\leq0.5$ and $b=0.4$ ($\l=0.6$),
	the entire domain is asymptotic to $K_{(a,b)}$. In particular, there
	exists values of $(a,b)$ for which the entire domain is asymptotic
	to a single Cantor set.
	\end{theorem}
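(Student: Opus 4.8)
The plan is to show that every regular point in $\mathbb{R}^2 \backslash P$ eventually lands in $R_1 \cup R_2$, so that the conclusion follows immediately from Theorem \ref{thm:two_cases}: once an orbit enters $R_1 \cup R_2$ it is forward invariant there and asymptotic to $K_{(a,b)}$. Equivalently, I must prove that $\bigcup_{i \geq 0} T^{-i}(R_1 \cup R_2)$ covers all of $X = \mathbb{R}^2 \backslash \big(P \cup (\cup_i T^{-i}S)\big)$. Since $K_{(a,b)}$ is the global attractor in $R_1 \cup R_2$ and $R_1 \cup R_2$ is a neighborhood of a piece of the singular ray extending $\overline{AB}$, the point of the proof is really to funnel everything into that neighborhood.

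The first step is to recall the standard fact (used for outer billiards with contraction, e.g.\ in \cite{Jeong2013}) that for $0<\lambda<1$ every orbit is bounded, and moreover there is an explicit absorbing region: there is a large compact annular region $\mathcal{A}$ around $P$ such that $T(\mathcal{A}) \subset \mathcal{A}$ and every point of $X$ enters $\mathcal{A}$ in finitely many steps. This reduces the problem to a compact piece of the plane. On $\mathcal{A}$, away from the finitely many singular rays and their finitely many relevant preimages (only finitely many matter because $\lambda$ is not too close to $1$ — this is the reason small $\lambda$ helps), the map $T$ is a piecewise similarity with contraction ratio $\lambda = 0.6$, so $\mathcal{A}$ is partitioned into finitely many convex polygonal ``atoms'' on each of which $T$ acts as an affine contraction. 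The second step is then a finite combinatorial computation: track the images of each atom under iteration of $T$, refining the partition as needed when an image straddles a singular ray, and verify that within a bounded number of steps every atom's forward image is contained in $R_1 \cup R_2$ — or, more efficiently, that every atom is contained in $\bigcup_{i=0}^{N} T^{-i}(R_1 \cup R_2)$ for some explicit finite $N$ depending only on the $\lambda = 0.6$, $b = 0.4$ slice. Continuity of all the relevant vertices and intersection points in $a$ over the closed interval $[0.4, 0.5]$ means it suffices to check the containments at finitely many values of $a$ (or, more carefully, to verify that the combinatorial type of the cover is constant on $[0.4,0.5]$ and check one representative), exactly as was done for the three crucial containments in Lemma \ref{lem:invariance}.

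The main obstacle is step two: there is no softness here, just the need to organize the finitely many atoms and their orbits so that the covering claim becomes a finite, checkable list of polygon-containment inequalities, uniformly over $a \in [0.4, 0.5]$. The bookkeeping is delicate because each time an image crosses a singular ray the partition refines, and one must argue that this refinement process terminates — which it does, since the contraction by $\lambda = 0.6$ shrinks diameters geometrically while the singular set near the absorbing region is a fixed finite union of segments, so after boundedly many steps every atom is small enough to lie entirely on one side of every relevant singular ray and thereafter evolves without further splitting, landing in $R_1 \cup R_2$. I would present this as: (i) exhibit the absorbing region and the bound $N$; (ii) list the atoms of the partition of $\mathcal{A}$ induced by $S, T^{-1}S, \dots, T^{-N}S$; (iii) for each atom verify $T^{k}(\text{atom}) \subset R_1 \cup R_2$ for some $k \leq N$, citing continuity in $a$ to reduce to a finite check; (iv) conclude via Theorem \ref{thm:two_cases} and Theorem \ref{thm:cantorset} that for the $a$'s producing irrational $\rho(g)$ the entire domain is asymptotic to a single Cantor set. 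The phrase ``uniquely ergodic'' in the abstract then follows because the unique attractor carries a unique invariant measure (the Cantor set being the $\omega$-limit set of every point, minimality of the induced circle map dynamics gives unique ergodicity of the attractor, and every orbit in the plane is asymptotic to it).
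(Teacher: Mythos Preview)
Your overall architecture---show everything falls into an absorbing compact region, then verify by a finite cover that this region feeds into $R_1\cup R_2$---is exactly the paper's. The paper, however, inserts a crucial intermediate object you are missing: a fixed pentagonal ``funnel'' region $Z_0$ containing the segment $\overline{EA}$, with the property that $Z_0$ is forward-invariant and every orbit in $Z_0$ converges to $\overline{EA}$ (hence enters $R_1\cup R_2$ for any $\ep>0$, after $N(\ep)$ steps). The covering step is then reduced to the clean statement $\mathcal{B}\subset Z_0\cup T^{-1}Z_0\cup T^{-2}Z_0\cup T^{-3}Z_0$, which is checked via six explicit auxiliary polygons $Z_1,\dots,Z_6$ whose vertices are written down as functions of $a$. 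This two-stage reduction (first to $Z_0$, then from $Z_0$ to the thin rectangles) is what makes the computation finite and uniform in $\ep$.

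Your proposal tries to go directly from the absorbing region to $R_1\cup R_2$ via a partition by $S, T^{-1}S,\dots,T^{-N}S$, and here there is a genuine gap. The termination argument you give---``atoms shrink geometrically, so after boundedly many steps each atom lies entirely on one side of every relevant singular ray and thereafter lands in $R_1\cup R_2$''---does not work: small diameter does not prevent an atom from straddling a singular line, so the refinement need not stabilize on diameter grounds alone; and even if an atom evolves without further splitting, nothing you have said forces it into $R_1\cup R_2$ rather than toward some other limit set---that is precisely the content of the theorem. You need either the paper's intermediate basin $Z_0$ (so that ``converges to $\overline{EA}$'' replaces ``lands in $R_1\cup R_2$'') or an explicit, exhaustive enumeration of atoms and their itineraries, which you have not provided. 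Finally, a minor point: your appeal to Theorem~\ref{thm:cantorset} for the existence of irrational $\rho(g)$ is for the $b=0.2$ family; for $b=0.4$ the paper separately checks $\rho(g)=0$ at $a=0.4$ and $\rho(g)=1/3$ at $a=0.5$, and you should do the same.
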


\begin{proof}
	We first note that when $a=0.4$, $\rho(g)=0$ and when $a=0.5$,
	$\rho(g)=1/3$. Hence, there exist values of $a$ between $0.4\leq a\leq0.5$
	such that $\rho(g)$ is irrational. Now the proof consists of showing
	three statements:
	\begin{enumerate}
		\item First, we construct a forward-invariant ball $\mathcal{B}$ such that
		every point in $\mathbb{R}^{2}\backslash P$ has a forward iterate
		in $\mathcal{B}$. 
		\item Then we construct a pentagonal region $Z_{0}$ which for all $\ep>0$
		(thickness of the rectangles $R_{1}$ and $R_{2}$) satisfies $T^{N(\ep)}Z_{0}\subset R_{1}\cup R_{2}$
		for some large $N(\ep)>0$. 
		\item Finally, we show that $\mathcal{B}\subset Z_{0}\cup T^{-1}Z_{0}\cup T^{-2}Z_{0}\cup T^{-3}Z_{0}$. 
	\end{enumerate}
	Then from 1, 2, and 3, we are done; any point in $\mathbb{R}^{2}\backslash P$
	has a forward iterate in $\mathcal{B}$ (by 1), and then it has a
	forward iterate in $Z_{0}$ (by 3), which has a forward iterate in
	$R_{1}\cup R_{2}$ (by 2). These statement will be proved in Lemmas
	\ref{lem:BALL}, \ref{lem:forward_iterations}, and \ref{lem:backward_iterates},
	respectively.\end{proof}

\begin{lemma}
	\label{lem:BALL} Take $\ep'>0$ arbitrary. Consider the max metric
	on the plane: $||(x,y)||=\max\{|x|,|y|\}$ and we define $\mathcal{B}$
	be the ball of radius $2+\ep'$ in this metric with center $(0.5,0.5)$.
	Then for $0.4\leq a\leq0.5$ and $b=0.4$, it is forward-invariant
	and every point in $\mathbb{R}^{2}\backslash P$ has a forward iterate
	in $\mathcal{B}$. \end{lemma}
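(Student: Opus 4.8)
The plan is to use the affine form of $T$: away from the singular rays, if $v$ is the vertex selected at a point $x$, then $Tx=(1+\l)v-\l x$. First I would record the elementary but crucial observation that all four vertices $A=(0,1)$, $B=(a,1)$, $C=(1,0.4)$, $D=(0,0)$ of $P$ have both coordinates in $[0,1]$ for every $0.4\le a\le 0.5$, hence each lies in the closed max-metric ball of radius $\tfrac12$ about $c=(0.5,0.5)$; that is, $\|v-c\|\le\tfrac12$ for every vertex $v$. Since $(1+\l)-\l=1$, we may write $Tx-c=(1+\l)(v-c)-\l(x-c)$, and the triangle inequality for $\|\cdot\|$ then yields, uniformly in the choice of vertex,
\[
\|Tx-c\|\le(1+\l)\cdot\tfrac12+\l\,\|x-c\|=0.8+0.6\,\|x-c\|,
\]
valid whenever $Tx$ is defined.

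From this single estimate both assertions follow. For forward invariance, if $\|x-c\|\le 2+\ep'$ then $\|Tx-c\|\le 0.8+0.6(2+\ep')=2+0.6\,\ep'<2+\ep'$, so $Tx\in\mathcal{B}$ (indeed strictly inside). For the second claim I would iterate: as long as the orbit of $x$ stays in $X$,
\[
\|T^{n}x-c\|\le 0.8\sum_{k=0}^{n-1}0.6^{k}+0.6^{n}\|x-c\|=2+0.6^{n}\bigl(\|x-c\|-2\bigr),
\]
which tends to $2$ as $n\to\infty$. Thus if $\|x-c\|\le 2$ the point is already in $\mathcal{B}$, and if $\|x-c\|>2$ then $0.6^{n}(\|x-c\|-2)<\ep'$ for all sufficiently large $n$, so $T^{n}x\in\mathcal{B}$. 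As usual, ``every point of $\mathbb{R}^{2}\setminus P$'' is understood for the full-measure set $X$ of points whose forward orbit never meets the singular set $S$; once a point enters $\mathcal{B}$, forward invariance keeps it there for all times at which its orbit is defined.

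I do not expect a genuine obstacle here; the argument is a short trapping-region estimate. The only point that needs care is that the bound $\|v-c\|\le\tfrac12$ must hold \emph{simultaneously} for every vertex that could be selected along an orbit — this is exactly why the center is placed at $(0.5,0.5)$ and why the max metric is the right one to use, since its ball $\{\|\cdot-c\|\le\tfrac12\}$ is the unit square $[0,1]^{2}$, which already contains all vertices of $P$. The arithmetic identity $0.8+0.6\cdot 2=2$ is what pins down the radius $2$, and the extra slack $\ep'$ absorbs the strict contraction; for smaller $\l$ the trapping ball would only shrink and the convergence accelerate, which is the sense in which this global statement is easier to verify when $\l$ is small.
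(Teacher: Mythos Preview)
Your argument is correct and essentially identical to the paper's: both use the affine form $Tx=(1+\l)v-\l x$, recenter at $(0.5,0.5)$, observe that every vertex satisfies $\|v-c\|\le\tfrac12$ in the max norm, and apply the triangle inequality to get the contraction $\|Tx-c\|\le(1+\l)\tfrac12+\l\|x-c\|$, whose fixed point $\tfrac{1+\l}{1-\l}\cdot\tfrac12=2$ yields the trapping radius. The paper states this as a general fact for an arbitrary norm and then specializes, while you compute directly with $\l=0.6$ and write out the geometric-series iteration explicitly; these are cosmetic differences only.
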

\begin{proof}
	This follows from a general fact; let $||\cdot||$ be any norm on
	the plane, and let $\{v_{1},...,v_{n}\}$ be the set of vertices of
	$P$. Then for a point $w\in\mathbb{R}^{2}\backslash P$, let us assume
	that 
	\[
	||w||=\frac{1+\lambda}{1-\lambda}\max_{i}||v_{i}||+\delta
	\]
	where $\delta>0$. Since $Tw=(1+\l)v-\l w$ for some vertex $v$ of
	$P$, we have
	
	\[
	||Tw||\leq(1+\l)\max||v_{i}||+\l\frac{1+\l}{1-\l}\max||v_{i}||+\l\delta=\frac{1+\lambda}{1-\lambda}\max_{i}||v_{i}||+\l\delta
	\]
	and hence 
	\[
	\limsup_{n\rightarrow\infty}||T^{n}w||\leq\frac{1+\lambda}{1-\lambda}\max_{i}||v_{i}||.
	\]
	Moreover, when we have 
	\[
	||w||\leq\frac{1+\lambda}{1-\lambda}\max_{i}||v_{i}||,
	\]
	we deduce 
	\[
	||Tw||\leq\frac{1+\lambda}{1-\lambda}\max_{i}||v_{i}||.
	\]
	Now, in our circumstances we have $\l=0.6$ and $||v_{i}||=0.5$ for
	all $i$, with respect to the center $(0.5,0.5)$. Hence any ball
	of radius strictly greater than $2$ centered at $(0.5,0.5)$ will
	do the job. 
\end{proof}

We will now present the region $Z_{0}$. For this, it is necessary
to construct some extra points: (see Figure \ref{fig:points_and_regions})
\begin{itemize}
	\item $D^{1}=(0,\frac{8}{3})$: the point on $\overrightarrow{DA}$ such
	that $|D^{1}A|:|AD|=1/\l$. 
	\item $C^{1}=(-\frac{5}{3},-\frac{2}{3})$: the point on $\overrightarrow{CD}$
	such that $|C^{1}D|:|DC|=1/\l$. 
	\item $D^{2}=(\frac{8}{3},-\frac{152}{45})$: the point on $\overrightarrow{D^{1}C}$
	such that $|D^{2}C|:|CD^{1}|=1/\l$. 
	\item $L=(\frac{8}{3},-\frac{2a+3}{5(1-a)})$: the intersection of $\overleftrightarrow{BC}$
	with the vertical line through $D^{2}$. 
	\item $L^{1}=(-\frac{40}{9},\frac{2a+3}{3(1-a)})$: the point on $\overrightarrow{LD}$
	such that $|L^{1}D|:|DL|=1/\l$. 
	\item $C^{2}=(\frac{25}{9},\frac{34}{9})$: the point on $\overrightarrow{C^{1}A}$
	such that $|C^{2}A|:|AC^{1}|=1/\l$.
	\item $E=(\frac{5}{9}(8-5a),1)$: the intersection of the line $\overleftrightarrow{AB}$
	with the line $\overleftrightarrow{L^{1}C^{1}}$. 
	\item $E^{1}=(\frac{25}{27}(8-5a),1)$: the point on $\overrightarrow{EA}$
	such that $|E^{1}A|:|AE|=1/\l$.
	\item $M=(\frac{5}{9}\frac{34a-49}{2a-5},\frac{2}{9}\frac{34a-49}{2a-5})$:
	the intersection of $\overleftrightarrow{C^{2}J^{1}}$ with $\overleftrightarrow{CD}$. 
	\item $M^{1}=(-\frac{25}{27}\frac{34a-49}{2a-5}+\frac{8}{3},-\frac{10}{27}\frac{34a-49}{2a-5}+\frac{16}{15})$:
	the point on $\overrightarrow{MC}$ such that $|M^{1}C|:|CM|=1/\l$.
	\item $A^{1}=(\frac{8a}{3},1)$: the point on $\overrightarrow{AB}$ such
	that $|A^{1}B|:|BA|=1/\l$. Here it is important that the $x$-coordinate
	of $A^{1}$ is greater than the $x$-coordinate of $C$. 
	\item $A^{2}=(-\frac{40a}{9}+\frac{8}{3},-\frac{3}{5})$: the point on $\overrightarrow{A^{1}C}$
	such that $|A^{2}C|:|CA^{1}|=1/\l$.
\end{itemize}
We have four extra points whose coordinates will not be important.
\begin{itemize}
	\item $N_{1}$: the intersection of the vertical line through $A^{1}$ with
	the line $BC$. 
	\item $N_{2}$: the intersection of the vertical line through $A^{2}$ with
	the line $DC$.
	\item $N_{3}$: the intersection of the horizontal line through $A^{2}$
	with the line $DC$.
	\item $N_{4}$: the intersection of the horizontal line through $D^{1}$
	with the vertical line through $L^{1}$. 
\end{itemize}
Now we are ready to define the region $Z_{0}$ as well as other regions
$Z_{1},...,Z_{6}$. We orient the vertices in clockwise order. Again,
see Figure \ref{fig:points_and_regions}.
\begin{itemize}
	\item $Z_{0}$: this is the pentagon with vertices $L^{1},N_{4},D^{1},D$,
	and $C^{1}$. For this to be well-defined, we check that the $y$-coordinate
	of $D^{1}$ is greater than or equal to the $y$-coordinate of $L^{1}$
	in the range $a\in[0.4,0.5]$. Indeed, for $a=0.5$, points $N_{4}$
	and $L^{1}$ coincides and $Z_{0}$ becomes a quadrilateral. 
	\item $Z_{1}$: this is the quadrilateral with vertices $D^{1},C^{2},E^{1}$,
	and $A$. 
	\item $Z_{2}$: this is the quadrilateral with vertices $A^{1},E^{1},L$,
	and $N_{1}$.
	\item $Z_{3}$: this is the triangle with vertices $B,A^{1}$, and $N_{1}$.
	\item $Z_{4}$: this is the right-angle triangle with vertices $N_{2},A^{2},$
	and $N_{3}$.
	\item $Z_{5}:$ this is the pentagon with vertices $N_{3},G,L,D^{2},$ and
	$M^{1}$. 
	\item $Z_{6}$: this is the quadrilateral with vertices $N_{2},C,G,$ and
	$A^{2}$. \end{itemize}
\begin{lemma}
	\label{lem:forward_iterations} For all $\ep>0$, there exists large
	$N>0$ such that $T^{N}Z_{0}\subset R_{1}\cup R_{2}$ where $\ep$
	is the thickness of rectangles $R_{1}$ and $R_{2}$. \end{lemma}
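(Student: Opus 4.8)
The plan is to track the pentagon $Z_{0}$ under iteration of $T$ by a finite, completely explicit itinerary, verifying at each step which vertex of $P$ the current region reflects across, and then to appeal to the exponential contraction already established in Lemma \ref{lem:invariance} to close the argument. First I would observe that $Z_{0}$, having vertices $L^{1},N_{4},D^{1},D,C^{1}$, lies entirely in a region where $T$ is given by a single affine map — i.e.\ the whole pentagon sees the same vertex of $P$ (this is why the auxiliary points $D^{1},C^{1},L^{1},N_{4}$ were defined via the $1/\l$-extension relations: they are precisely the $T$-preimages of $A$, $D$, $C$, etc.). So $TZ_{0}$ is again a pentagon, with vertices the images of $L^{1},N_{4},D^{1},D,C^{1}$; by construction $D^{1}\mapsto A$, $C^{1}\mapsto D$, $L^{1}\mapsto L$, and one computes the images of $N_{4}$ and $D$ directly. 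I would then identify $TZ_{0}$ with (a subset of) the union $Z_{1}\cup\dots\cup Z_{6}$ — this is the purpose of introducing those six regions — and check the containment $TZ_{0}\subset \bigcup_{i=1}^{6} Z_{i}$ by comparing vertex coordinates, all of which are explicit affine functions of $a$ on $[0.4,0.5]$.

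Next I would iterate each $Z_{i}$ separately. For each $i$, the region $Z_{i}$ was chosen small enough that it too lies in a single affinity domain of $T$ (or of $T$ followed by a second step), so one gets an explicit polygonal image; I would chase each $Z_{i}$ forward through a bounded number of steps until its image lands inside the invariant rectangle $R_{1}\cup R_{2}$ of Definition \ref{def:inv_rectangle}, or lands inside one of the other $Z_{j}$'s (and hence is handled by that case), or — most importantly — lands inside the segment-neighborhood $\{ \text{dist to } \overline{EA} \text{ small}\}$ that contains $R_{1}\cup R_{2}$ and is funneled into it. The key quantitative input is that each application of $T$ contracts the distance to $\overleftrightarrow{AB}$ or $\overleftrightarrow{FG}$ by the factor $\l=0.6$ (the mechanism used in the proof of Lemma \ref{lem:invariance}); hence after finitely many steps — $N=N(\ep)$ of them, with $N(\ep)=O(\log(1/\ep))$ — the images are squeezed into a strip of vertical thickness $<\ep^{2}$ about $\overline{EA}$, and the horizontal extent is controlled so that the image lies over $I_{\ep}=R_{1}\cap R_{2}$. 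At that point the image is inside $R_{1}\cup R_{2}$ and we are done.

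The dependence on $\ep$ is only through $N$: the finite symbolic itinerary $Z_{0}\to Z_{i}\to\cdots$ and the identification of reflecting vertices is independent of $\ep$, and $\ep$ enters solely to determine how many extra contracting steps are needed to enter the $\ep^{2}$-thin rectangles rather than merely their common neighborhood. So the statement ``for all $\ep>0$ there exists large $N$'' falls out: pick the itinerary first, then take $N$ large enough that $\l^{N}$ times the diameter of the relevant intermediate region is below $\ep^{2}$.

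The main obstacle is bookkeeping rather than conceptual: one must verify, for \emph{every} intermediate polygon in \emph{every} branch of the itinerary and for \emph{all} $a\in[0.4,0.5]$, that the polygon does not straddle a singular ray of $T$ (so that it really does map affinely) and that its image sits where claimed. Because the vertex coordinates are rational affine functions of $a$, each such check reduces to a handful of linear inequalities in $a$ on a compact interval, which can be confirmed by evaluating at the endpoints $a=0.4$ and $a=0.5$ together with monotonicity — exactly the style of verification used for $T^{3}R_{1}\subset R_{2}$ and $T^{4}R_{2l}\subset R_{2r}$ in Lemma \ref{lem:invariance}, and illustrated by the figures. The degenerate case $a=0.5$, where $N_{4}=L^{1}$ and $Z_{0}$ collapses to a quadrilateral, should be noted but does not change the argument since containments are preserved under this degeneration.
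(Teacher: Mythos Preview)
Your proposal rests on two claims that are both incorrect, and together they send the argument in the wrong direction.

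First, $Z_{0}$ does \emph{not} lie in a single affinity domain of $T$. The pentagon has vertices with $y$-coordinates ranging from $-2/3$ (at $C^{1}$) to $8/3$ (at $D^{1}$), so it straddles the line $y=1$ containing $\overline{EA}$; points above and below this line see different vertices of $P$. (Your own computations are off here too: by construction $T(D^{1})=D$ and $T(C^{1})=C$, not $A$ and $D$ as you wrote --- the superscript ``$1$'' marks a $T$-preimage of the base point under reflection at the intervening vertex.) So the opening move ``$TZ_{0}$ is again a pentagon'' already fails.

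Second, you have the role of $Z_{1},\dots,Z_{6}$ reversed. Those regions are introduced so that each $Z_{i}$ maps \emph{into} $Z_{0}$ after one, two, or three steps; this is exactly the content of Lemma~\ref{lem:backward_iterates}, which shows $\mathcal{B}\subset\bigcup_{i}Z_{i}\subset\bigcup_{k\le 3}T^{-k}Z_{0}$. They are not designed to receive $TZ_{0}$, and there is no reason to expect $TZ_{0}\subset\bigcup_{i\ge 1}Z_{i}$.

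The paper's proof of Lemma~\ref{lem:forward_iterations} is instead entirely internal to $Z_{0}$ and parallels Lemma~\ref{lem:invariance}: split $Z_{0}$ by $\overline{EA}$ into $Z_{0}^{+}$ (above) and $Z_{0}^{-}$ (below), then verify $T^{3}Z_{0}^{+}\subset Z_{0}^{-}$, and $T^{3}Z_{0}^{-}=Z'\cup Z''$ with $Z'\subset Z_{0}^{+}$ and $T(Z'')\subset Z_{0}^{-}$. This shows $Z_{0}$ is forward-invariant under the appropriate return map, and since each pass contracts the distance to $\overline{EA}$ by a factor $\l^{3}$ or $\l^{4}$, every orbit is eventually within $\ep^{2}$ of $\overline{EA}$ and hence in $R_{1}\cup R_{2}$. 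Your final paragraph about the $\ep$-dependence is correct in spirit and matches this; the problem is the itinerary you propose to get there.
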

\begin{proof}
	The segment $\overline{EA}$ divides $Z_{0}$ to two regions; denote
	them by $Z_{0}^{+}$ (one above) and $Z_{0}^{-}$ (one below). Note
	that these regions contain the segment $\overline{EA}$, which is
	the domain of our first-return maps $f$ and $g$ analyzed in the
	previous section. Therefore, it is enough to prove that $Z_{0}$ is
	forward-invariant and that points in $Z_{0}$ converge to the line
	segment $\overline{EA}$ under the iteration of the first-return map
	to $Z_{0}$. This proof is parallel to the proof of Lemma \ref{lem:invariance}
	and we will only sketch it. See Figure \ref{fig:forward_iterates}.
	
	Consider the rectangle which has three vertices $N_{4},D^{1},$ and
	$A$. This rectangle contains $Z_{0}^{+}$. Then it is elementary
	to check that $T^{3}$ of this rectangle is again a rectangle contained
	in $Z_{0}^{-}$ with one vertex $A$. The $y$-coordinates of points
	in $Z_{0}^{+}$ relative to $1$ (the $y$-coordinate of the line
	$AB$) have contracted by $\l^{3}$ in $T^{3}(Z_{0}^{+})$. 
	
	Next, note that $T^{3}(Z_{0}^{-})$ naturally splits into two regions,
	say $Z'$ (the one on the left) and $Z''$ (the one on the right),
	by the vertical line $AD$. First, $Z'$ is contained in $Z_{0}^{+}$
	so we are good. Next, $T(Z'')$ is contained in $Z_{0}^{-}$ and we
	note that the $y$-coordinates of the points in $T(Z'')$ relative
	to $1$ have contracted by $\l^{4}$. We are done.
\end{proof}

\begin{figure}[h]
\centering
		\includegraphics[scale=0.4]{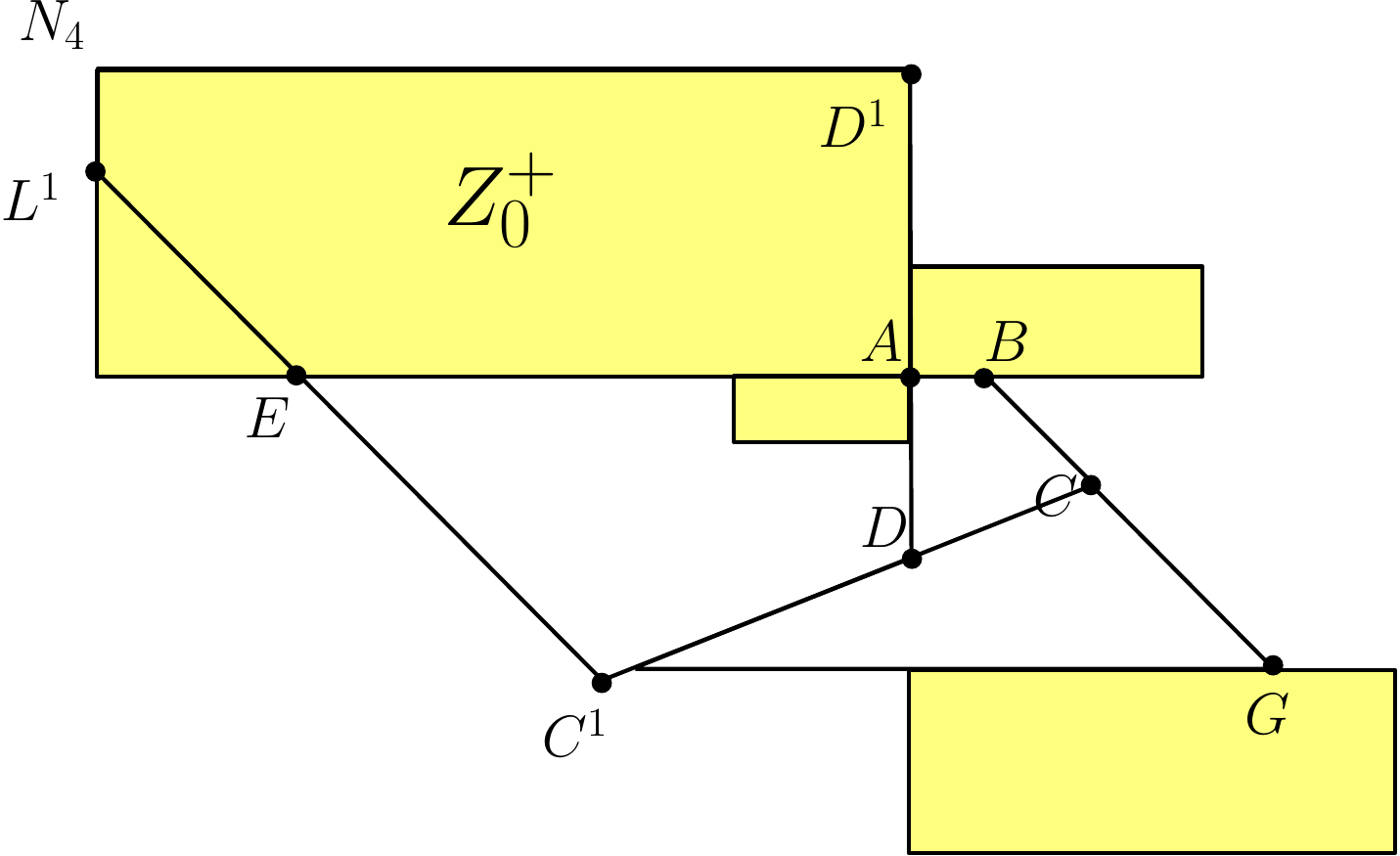}
		\includegraphics[scale=0.4]{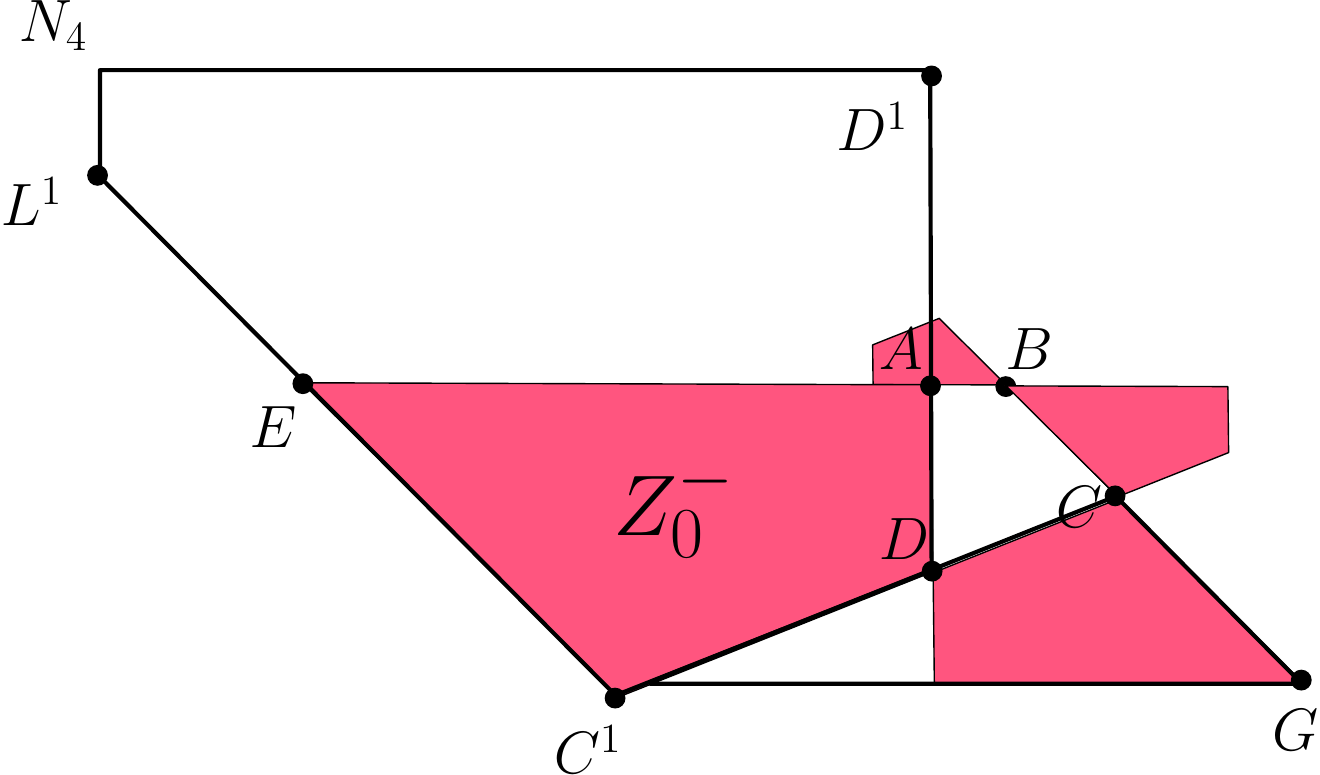}

\caption{\label{fig:forward_iterates}Left: $T^{3}Z_{0}^{+}\subset Z_{0}^{-}$,
	right: $T^{3}Z_{0}^{-}=Z'\cup Z''$ where $Z'\subset Z_{0}^{+}$ and
	$T(Z'')\subset Z_{0}^{-}$. (drawn for $a=0.4$)}
\end{figure}

The following is the most tedious lemma to prove. It will be much harder to prove for larger values of $\l$. 
\begin{lemma}
	\label{lem:backward_iterates} $\mathcal{B}\subset Z_{0}\cup T^{-1}Z_{0}\cup T^{-2}Z_{0}\cup T^{-3}Z_{0}$. \end{lemma}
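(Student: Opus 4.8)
The plan is to partition the ball $\mathcal{B}$ into regions according to which vertex of $P$ each point reflects on under $T$, and to track where each piece lands. The singular rays through the sides of $P$ divide $\mathbb{R}^2\backslash P$ into (at most) four angular sectors, one for each vertex. Intersecting these sectors with $\mathcal{B}$, and further subdividing as dictated by where the first one or two forward iterates fall, gives a finite partition of $\mathcal{B}$ into convex polygonal pieces. For each piece $W$, I would verify one of the four alternatives: $W\subset Z_0$, or $TW\subset Z_0$, or $T^2W\subset Z_0$, or $T^3W\subset Z_0$. Since on each angular sector $T$ acts as a fixed affine map $w\mapsto(1+\l)v-\l w$ (with $v$ the relevant vertex), each of $T$, $T^2$, $T^3$ restricted to a piece is affine, so the containment $T^kW\subset Z_0$ reduces to checking that the images of the (finitely many) vertices of $W$ land inside the convex pentagon $Z_0$ — a finite list of linear inequalities in the coordinates, each of which is in turn affine (or at worst rational) in the parameter $a\in[0.4,0.5]$, hence checkable at the two endpoints together with a sign/monotonicity argument. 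The auxiliary regions $Z_1,\dots,Z_6$ and the points $D^1,C^1,D^2,L,L^1,C^2,E^1,M,M^1,A^1,A^2,N_1,\dots,N_4$ are evidently set up precisely to name the intermediate images: I expect $Z_1$ is (a superset of) $T$ applied to the part of $\mathcal{B}$ reflecting on $A$ and lying above $\overline{AB}$, $Z_5$ and $Z_6$ package the images of the parts reflecting on $C$, and so on, with the chain $Z_i \to Z_j \to \cdots \to Z_0$ arranged so that every sector-piece of $\mathcal{B}$ feeds into $Z_0$ within three steps.

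Concretely, the steps in order: (1) write down the four half-plane inequalities cutting $\mathbb{R}^2\backslash P$ into the vertex-sectors, intersect with the eight inequalities defining $\mathcal{B}$; (2) on the $A$-sector, split $\mathcal{B}\cap(\text{$A$-sector})$ along $\overline{AB}$ and further along the preimages of the sector boundaries, and check that one sub-piece is directly in $Z_0$ while the others map into $Z_1$, then that $Z_1$ maps into a region already known to reach $Z_0$ in two more steps; (3) repeat for the $B$-sector using $Z_3, Z_2$ and the point $A^1$ (the remark that the $x$-coordinate of $A^1$ exceeds that of $C$ is exactly the condition ensuring $A^1$ still reflects on $C$, so $T$ applied to the $B$-part behaves as expected); (4) for the $C$-sector use $Z_6, Z_5$ and $M^1, D^2, L$; (5) for the $D$-sector use $Z_0$ itself together with $D^1, C^1, C^2, L^1$ — note $Z_0$ has $C^1$ and $D^1$ among its vertices, so a chunk of the $D$-sector is literally inside $Z_0$. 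Throughout, one uses that $T$ is affine on each piece so "image of a polytope is the convex hull of images of vertices", reducing everything to a bounded number of explicit inequalities.

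The main obstacle is purely bookkeeping volume: there are four sectors, each potentially subdivided into several pieces, each piece requiring a vertex-by-vertex containment check for one of $T^k Z_0$, and each check must hold uniformly for $a\in[0.4,0.5]$. Two genuine (small) subtleties need care rather than brute force. First, the choice $\l=0.6$ is used essentially: it makes $\frac{1+\l}{1-\l}\max_i\|v_i\| = 4\cdot 0.5 = 2$, so $\mathcal{B}$ has a clean radius, and more importantly it makes $\l$ small enough that three reflections contract distances-to-$\overline{AB}$ by $\l^3$ enough to land in the thin-ish region near $\overline{EA}$ inside $Z_0$; this is the "easier for small $\l$" phenomenon flagged before the lemma, and the inequality $a<1+\l-\l^3-\l^4$ plus $a+b\le 1$ must be invoked to know $Z_0$ is nondegenerate (the $y$-coordinate of $D^1$ at least that of $L^1$, which is the stated well-definedness check). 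Second, one must handle points exactly on singular rays by the usual convention (defining $T$ so as to be continuous from the side under consideration), so that the closed pieces of the partition genuinely cover $\mathcal{B}$ with the correct dynamics on their boundaries. Once the partition is fixed, the remaining $30$-odd inequalities are each of the form "an affine function of $a$ is positive on $[0.4,0.5]$", dispatched by evaluating at $a=0.4$ and $a=0.5$.
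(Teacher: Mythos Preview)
Your plan is correct and is essentially the paper's own proof: the paper covers $\mathcal{B}$ by the regions $Z_0,\dots,Z_6$ (verified by checking that the extreme coordinates of $\bigcup_i Z_i$ exceed the corners of $\mathcal{B}$) and then checks the forward chain $T(Z_1)=Z_0^-$, $T(Z_2)\subset Z_0^+$, $T^2(Z_3)\subset Z_0^-$, $T(Z_4)\subset Z_2$, $T(Z_5)\subset Z_1$, $T(Z_6)\subset Z_3$, each reduced to finitely many vertex inequalities exactly as you describe. Your speculative sector-by-sector assignments are slightly off (for instance $Z_1$ is itself the $A$-sector piece and reaches $Z_0$ in one step, not three), but this is just labeling and does not affect the validity of the plan.
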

\begin{proof}
	We prove $\mathcal{B}\subset\cup_{i=0}^{6}Z_{i}$ and $\cup_{i=0}^{6}Z_{i}\subset Z_{0}\cup T^{-1}Z_{0}\cup T^{-2}Z_{0}\cup T^{-3}Z_{0}$.
	
	For the first statement, it is enough to check the following for all
	$a\in[0.4,0.5]$, each of which is a simple computation using the
	expression for coordinates given above:
	\begin{itemize}
		\item the $x$-coordinate of $C^{1}$ is strictly less than $-1.5$.
		\item the $y$-coordinates of $M^{1}$ and $D^{2}$ are both strictly less
		than $-1.5$.
		\item the $x$-coordinate of $L$ is strictly greater than $2.5$. 
		\item the $y$-coordinate of $D^{1}$ is strictly greater than $2.5$. 
		\item finally, the point $(2.5,2.5)$ (upper right corner) lies below the
		line $C^{2}E^{1}$. 
	\end{itemize}
	In Figure \ref{fig:points_and_regions}, the ball of radius $2$ is
	drawn for the case $a=0.4$. 
	
	The second statement is reduced to check all of the following:
	\begin{itemize}
		\item $T(Z_{1})=Z_{0}^{-}$: this is simply how $Z_{1}$ is defined.
		\item $T(Z_{2})\subset Z_{0}^{+}$: enough to check that the vertex $L$,
		after reflecting on $B$, gets inside $Z_{0}^{+}$. 
		\item $T^{2}(Z_{3})\subset Z_{0}^{-}$: this is straightforward.
		\item $T(Z_{4})\subset Z_{2}$: enough to check that the vertex $N_{3}$,
		after reflecting on $C$, gets inside the segment $\overline{A^{1}E^{1}}.$
		Hence $T^{2}(Z_{4})\subset Z_{0}$. 
		\item $T(Z_{5})\subset Z_{1}$: this follows from the definition of $Z_{5}$.
		Hence $T^{2}(Z_{5})\subset Z_{0}$.
		\item $T(Z_{6})\subset Z_{3}$: this is simply how $Z_{6}$ is defined.
		Hence $T^{3}(Z_{6})\subset Z_{0}$. 
	\end{itemize}
	We are done.
\end{proof}

For each pair $(a,b)$ satisfying $0<a,b<1$, $a+b\leq1$, $a<1+\l-\l^{3}-\l^{4}$
$(\l=1-b$), recall that we had a unique attractor $K_{(a,b)}$ for
points in $X\cap(R_{1}\cup R_{2})$. Then we have the following result. 
\begin{theorem}
	For each $(a,b)$, there is a unique Borel $T$-invariant probability
	measure $\mu_{(a,b)}$ supported on the set $K_{(a,b)}$, such that
	for each continuous function $\phi$ on $\mathbb{R}^{2}\backslash P$,
	\[
	\lim_{n\rightarrow\infty}\frac{\sum_{i=0}^{n-1}\phi(T^{i}x)}{n}\rightarrow\int\phi|_{K_{(a,b)}}d\mu_{(a,b)}
	\]
	holds for all $x\in X\cap(R_{1}\cup R_{2})$, and for all $x\in X$
	for parameters in the range $a\in[0.4,0.5]$, $b=0.4$.\end{theorem}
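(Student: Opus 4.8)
The plan is to push everything down to the one-dimensional maps $f$ and $g$ of Section \ref{sec:Attracting-Cantor-Sets}, prove unique ergodicity there, and lift back to $T$ by a first-return (suspension) argument. Write $B=R_{1}\cup R_{2}$ and let $\Phi$ be the first-return map of $T$ to $B$. The containments $T^{3}R_{1}\subset R_{2}$, $T^{4}R_{2l}\subset R_{2r}$, $T^{3}R_{2r}\subset R_{1}$ used in Lemma \ref{lem:invariance} show that the return time $r\colon B\to\mathbb{Z}_{>0}$ takes only the values $3$ and $4$; in particular it is bounded. Collapsing vertical fibres as in Lemma \ref{lem:limit_set_reduction}, $\Phi$ projects to the interval map on $\overline{EA}$ given by $f$ on the $R_{1}$-part and $g$ on the $R_{2}$-part; moreover, by Lemma \ref{lem:limit_set_reduction}, the $\Phi$-attractor inside $B$ is exactly $\Omega:=\omega_{T}(p)\cap\overline{EA}$, and by the proof of Theorem \ref{thm:two_cases} $K_{(a,b)}$ is the union of $\Omega$ with its finitely many forward $T$-images. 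So it suffices to prove that the one-dimensional system is uniquely ergodic on $\Omega$ and that this survives the bounded-return-time suspension.

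Unique ergodicity on $\Omega$ splits into two cases. If $\rho(g)\in\mathbb{Q}$, then by Theorem \ref{thm:two_cases} $\Omega$ is a single periodic orbit, on which the only invariant probability measure is the equidistributed one. If $\rho(g)\notin\mathbb{Q}$, the rotation theory of Section \ref{sec:Rotation-Theory-for} (in particular the description of $\omega$-limit sets, cf. \cite{Brette2003}) provides a degree-one, weakly monotone, continuous semiconjugacy $h\colon S^{1}\to S^{1}$ with $h\circ g=R_{\rho(g)}\circ h$, where $R_{\rho(g)}$ is the rigid rotation whose unique invariant measure is Lebesgue measure $\ell$. Since the $g$-attractor $\Omega_{g}$ is a Cantor set, $h$ is constant on each complementary arc (otherwise the $R_{\rho(g)}$-orbit of the corresponding image arc would be a disjoint family of arcs of infinite total length), so $h|_{\Omega_{g}}$ maps onto $S^{1}$ and is injective apart from identifying the two endpoints of each of the countably many complementary arcs. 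Hence pulling $\ell$ back along $h|_{\Omega_{g}}$ produces a $g$-invariant probability measure $\nu_{g}$ on $\Omega_{g}$; conversely, any $g$-invariant probability measure on $\Omega_{g}$ is non-atomic (irrationality of $\rho(g)$ excludes periodic points and $g$ is injective), so it pushes forward under $h$ to an $R_{\rho(g)}$-invariant measure, necessarily $\ell$, and therefore coincides with $\nu_{g}$. Thus $g$ is uniquely ergodic on its Cantor attractor, and likewise $f$; since the two attractors are identified by the orbit equivalence between the $R_{1}$- and $R_{2}$-parts, $\Phi$ is uniquely ergodic on $\Omega$, with some invariant probability measure $\nu$.

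Because $r$ is bounded, the Kac--Abramov construction turns $\nu$ into a $T$-invariant probability measure $\mu_{(a,b)}$, supported on the finite union $K_{(a,b)}=\bigcup_{0\le j}T^{j}\bigl(\{r>j\}\cap\Omega\bigr)$, and the same correspondence shows that any $T$-invariant probability measure supported on $K_{(a,b)}$ induces, after normalizing its restriction to $B$, a $\Phi$-invariant measure on $\Omega$, hence $\nu$, hence equals $\mu_{(a,b)}$; so $\mu_{(a,b)}$ is the unique such measure. Now take $x\in X\cap B$. By Theorem \ref{thm:two_cases} and Lemma \ref{lem:limit_set_reduction}, $\omega_{T}(x)=K_{(a,b)}$, so every weak-$*$ limit point of the empirical measures $\tfrac1n\sum_{i=0}^{n-1}\delta_{T^{i}x}$ is a Borel probability measure supported on $K_{(a,b)}$, and it is $T$-invariant by the standard telescoping argument (the orbit stays in $X$, where $T$ is continuous, and the continuity convention at the finitely many singular points met by $K_{(a,b)}$ supplies the required one-sided continuity). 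Hence this limit point is $\mu_{(a,b)}$, the empirical measures converge, and $\tfrac1n\sum_{i=0}^{n-1}\phi(T^{i}x)\to\int\phi|_{K_{(a,b)}}\,d\mu_{(a,b)}$ for every continuous $\phi$. For $a\in[0.4,0.5]$, $b=0.4$, the previous theorem gives every $x\in X$ a forward iterate in $B$, and since Birkhoff averages ignore a finite prefix the conclusion extends to all such $x$.

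The main obstacle is the irrational case of unique ergodicity on $\Omega$: one must run the rotation-number semiconjugacy for a genuinely discontinuous (though strictly increasing) lift and confirm that restricting $h$ to the Cantor attractor discards only a countable, $\ell$-null amount of information, so that invariant measures transport losslessly in both directions. Everything else --- the bounded-return-time suspension and its effect on Birkhoff averages, the harmless finitely many singular points of $T$ on $K_{(a,b)}$, and promoting limit points of empirical measures to a genuine limit --- is routine, though it requires careful bookkeeping of which forward image of $\Omega$ a given orbit segment occupies.
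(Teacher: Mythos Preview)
Your approach is essentially the paper's: reduce to the one-dimensional maps, prove unique ergodicity there via the semiconjugacy to the rigid rotation (citing \cite{Katok95} in the paper, spelled out more carefully by you), and lift back to $T$ using that return times are bounded. One small slip: the first-return map $\Phi$ to $B=R_{1}\cup R_{2}$ does \emph{not} project to $f$ on the $R_{1}$-part and $g$ on the $R_{2}$-part, since $f=\mathrm{Proj}\circ\hat{f}$ and $g=\mathrm{Proj}\circ\hat{g}$ are the first-return maps to $R_{1}$ alone and $R_{2}$ alone (with return times $\{6,10\}$ and $\{4,6\}$ respectively, cf.\ the slopes $\lambda^{6},\lambda^{10}$ and $\lambda^{4},\lambda^{6}$ in Lemma~\ref{lem:rotation number}), whereas $\Phi|_{R_{1}}$ lands in $R_{2}$ after exactly $3$ steps. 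This does not damage your argument---you can run the Kac--Abramov construction directly from $\hat{f}$ (or $\hat{g}$) with its own bounded return time, or observe that $f$ and $g$ are the induced maps of $\Phi$ on the two halves and inherit unique ergodicity in both directions---but the identification as written is off.
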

\begin{proof}
	We first prove that $g$ is uniquely ergodic (that is, $g$ has only
	one Borel invariant probability measure supported on the common $\omega$-limit
	set of its regular points). In the case when $\rho(g)$ is rational,
	every orbit is asymptotic to a single periodic orbit and hence the
	atomic probability measure supported on this periodic orbit is the
	unique invariant measure. When $\rho(g)$ is irrational, we repeat
	the unique ergodicity argument for the circle homeomorphism (with
	irrational rotation number) explained in detail in \cite{Katok95}.
	One can still construct a semiconjugacy between $g$ and the irrational
	rotation by angle $\rho(g)$. 
	
	In the same way, $f$ is uniquely ergodic as well, and we obtain an
	invariant probability measure of $T$ by a linear combination of invariant
	measures for $f$ and $g$ (viewed as measures on the segment $\overline{EA}$)
	and their pushforwards by $T$. Moreover, given an invariant measure
	for $T$ supported on $K_{a}$, we obtain invariant measures for $f$
	and $g$. Therefore, $T$ is uniquely ergodic on the union of $X\cap(R_{1}\cup R_{2})$
	and its forward iterates. 
	
	Now the statement regarding the Birkhoff average holds for $f$ and
	$g$ and therefore it holds for $T$ as well.
\end{proof}

\section*{Acknowledgements}

A large part of this work was done while the author was participating
in the ICERM undergraduate research program in 2012. 

We thank our advisors Prof. Tabachnikov, Prof. Hooper, Tarik Aougab,
and Diana Davis for their support and guidance. Also we thank Julienne
Lachance and Francisc Bozgan for their enthusiasm in this project
and for various helpful discussions. The computer program developed
by Prof. Hooper and J. Lachance played a crucial role in our research
and is responsible for all the results in this article. The coloring
scheme which produced Figure \ref{fig:Vertical-axis:-,} was also
due to Prof. Hooper. Moreover, he was the first to notice the possibility
of finding an attracting Cantor set. 

It is simply impossible to underestimate the amount of help we had
from Prof. Schwartz. Being the very expert on outer billiards, he
gave us valuable comments and insights throughout the research program.
In addition, he had read the drafts multiple times and his feedback
significantly improved the quality of this paper. In particular, he
suggested to use three rectangles in the proof of the main theorem.

Lastly, we express our sincere gratitude to the referee who read the
manuscript very carefully and provided several important comments.

The author is supported by the Samsung Scholarship.

\bibliographystyle{plain}
\bibliography{outer_billiards}

\begin{figure}[h]
\centering
		\includegraphics[scale=0.5]{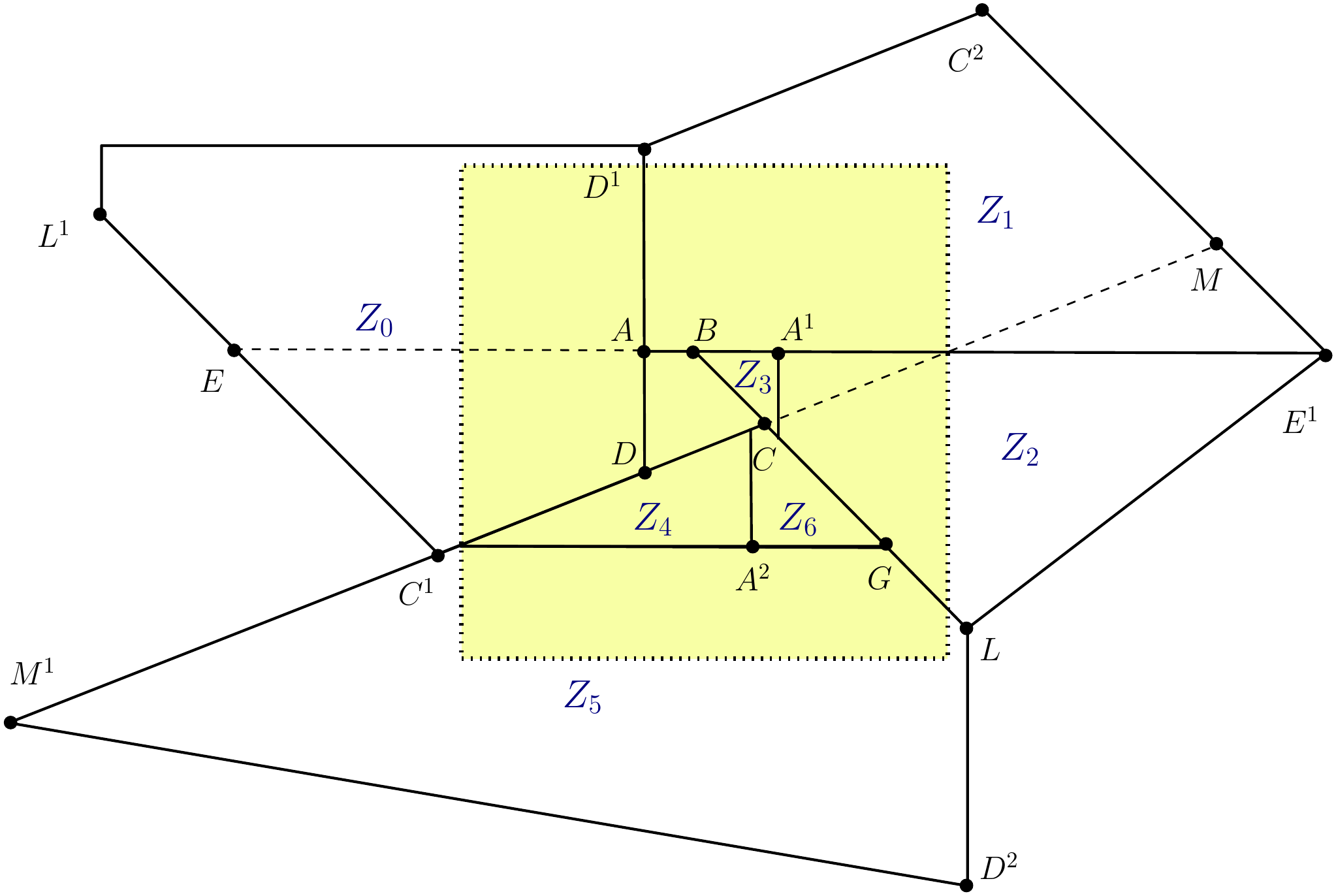}

\centering
		\includegraphics[scale=0.5]{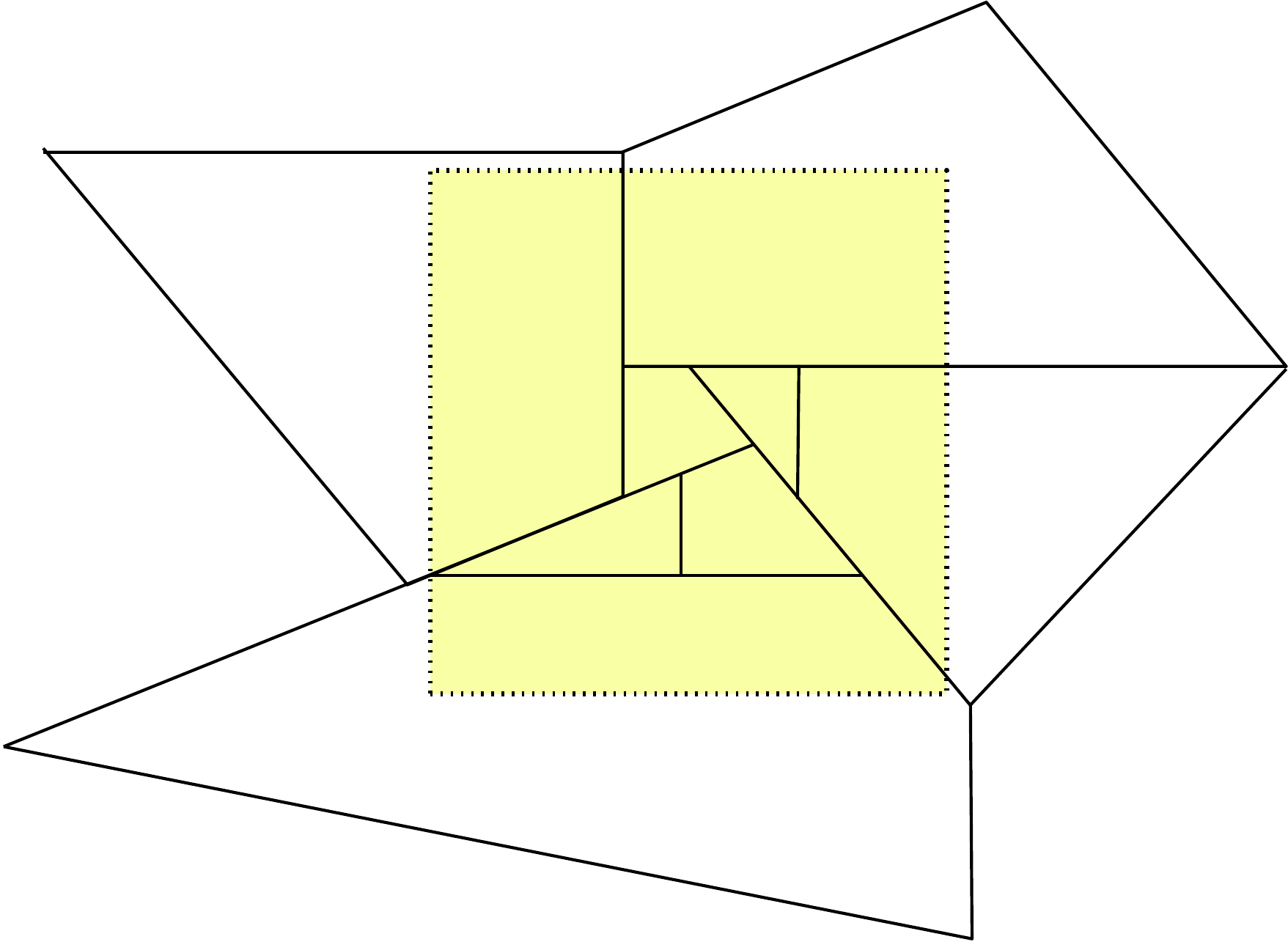}

	\caption{\label{fig:points_and_regions}Extra points and regions, drawn for
		$a=0.4$ (above) and $a=0.5$ (below)}
\end{figure}

\end{document}